\numberwithin{equation}{section}
\theoremstyle{plain}
\newtheorem{theorem}{\sc Theorem}[section]
\newtheorem{lemma}[theorem]{\sc Lemma}
\newtheorem{proposition}[theorem]{\sc Proposition}
\newcommand{\be}{\begin{equation}}
\newcommand{\ee}{\end{equation}}
\providecommand{\norm}[1]{\Vert#1\Vert}
\newcommand{\N}{\mathbb{N}}
\newcommand{\xk}[1]{x_{#1}}
\newcommand{\kix}[1]{\xi_{#1}}
\newcommand{\lam}[1]{\lambda_{#1}}
\newcommand{\incl}[1]{\mathds{1}\{{#1}\}}
\newcommand{\lamm}[1]{\lambda_{{#1}}}
\newcommand{\E}{\mathbb{E}}
\newcommand{\nul}[1]{\nu_{{#1}}}
\newcommand{\tn}{2^n}
\def\e{\varepsilon}
\begin{document}

\title[The Parabolic Anderson model on the hypercube]{The Parabolic Anderson model on the hypercube}

\author{Luca Avena}
\address{Luca Avena\\Universiteit Leiden\\ Niels Bohrweg 1
	\\ 2333 CA Leiden\\Netherlands}
\email{l.avena@math.leidenuniv.nl}

\author{Onur G\"un}
\address{Onur G\"un\\ Weierstrass Institute\\ Mohrenstrasse 39\\ 10117 Berlin\\ Germany}
\email{Onur.Guen@wias-berlin.de}

\author{Marion Hesse}
\address{Marion Hesse\\ Weierstrass Institute\\ Mohrenstrasse 39\\ 10117 Berlin\\ Germany}
\email{Hesse@wias-berlin.de}

\date{\today}

\subjclass[2010]{Primary 60H25, 82C27, 92D25; Secondary 82D30, 60K37.} 
\keywords{parabolic Anderson model, mutation-selection model, localisation, random energy model.}

\maketitle

\begin{abstract}
We consider the parabolic Anderson model $\frac{\partial}{\partial t} v_n=\kappa\Delta_n v_n + \xi_n v_n$ on the $n$-dimensional hypercube $\{-1,+1\}^n$ with random i.i.d. potential $\xi_n$. We parametrize time by volume and study $v_n$ at the location of the $k$-th largest potential, $x_{k,\tn}$. Our main result is that, for a certain class of potential distributions, the solution exhibits a phase transition: for short time scales $v_n(t_n,x_{k,\tn})$ behaves like a system without diffusion and grows as $\exp\big\{(\xi_n(x_{k,\tn}) - \kappa)t_n\big\}$, whereas, for long time scales the growth is dictated by the principle eigenvalue and the corresponding eigenfunction of the operator $\kappa \Delta_n+\xi_n$, for which we give precise asymptotics. Moreover, the transition time depends only on the difference $\xi_n(x_{1,\tn})-\xi_n(x_{k,\tn})$. 

One of our main motivations in this article is to investigate the mutation-selection model of population genetics on a random fitness landscape, which is given by the ratio of $v_n$ to its total mass, with $\xi_n$ corresponding to the fitness landscape. We show that the phase transition of the solution translates to the mutation-selection model as follows: a population initially concentrated at $x_{k,\tn}$ moves completely to $x_{1,\tn}$ on time scales where the transition of growth rates happens. The class of potentials we consider involve the Random Energy Model (REM) of statistical physics which is studied as one of the main examples of a random fitness landscape.

\end{abstract}

\section{Introduction and main results}

\subsection{The Model.}
Consider the $n$-dimensional hypercube $\Sigma_n=\{-1,1\}^n$, $n\in\N$. For $x\in\Sigma_n$, we use the notation $x=(x^{(1)},\dots,x^{(n)})$, where $x^{(i)}$ denotes the spin of $x$ at spin site $i$. The Hamming distance on $\Sigma_n$ is defined by
\begin{equation}
d(x,y)=\#\{i:\;x^{(i)}\not= y^{(i)}\}.
\end{equation}
We declare that $x$ and $y$ are neighbours, denoted by $x\sim y$, if $d(x,y)=1$.

Our model is described through a system of differential equations with random potential,
\begin{equation}\label{PAM}
\frac{\partial}{\partial t} v_n(t,x,y)=\kappa\Delta_n v_n(t,x,y) + \xi_n(x) v_n(t,x,y) ,\quad t\geq 0,\; x\in\Sigma_n
\end{equation}
with the localized initial condition $v_n(0,\cdot,y)= \delta_y(\cdot)$. Here, $\kappa>0$ is the diffusion constant and $\Delta_n$, acting on the second coordinate, denotes the Laplace operator on $\Sigma_n$
\begin{equation}
\Delta_n f(x):=\frac{1}{n}\sum_{z\sim x}\big(f(z)-f(x)\big),\quad x\in\Sigma_n,
\end{equation} 
where $f$ is a function on $\Sigma_n$ and ${\xi}_n:=\{\xi_n(x):\;x\in\Sigma_n\}$ is the random potential. 

The solution of (\ref{PAM}) admits a Feynman-Kac representation
\begin{equation}\label{feynmankac}
v_n(t,x,y)=\mathbb{E}_{x}[\exp(\int_0^t \xi_n(X_s)ds)\incl{X_t=y}]
\end{equation}
where $(X_s:s\geq 0)$ is distributed as a simple random walk on $\Sigma_n$ with the generator $\kappa \Delta_n$ and $\mathbb{E}_x$ stands for its expectation when the walk starts at $x$, i.e., $X_0=x$. Since the simple random walk on $\Sigma_n$ is time reversible, we can conclude from (\ref{feynmankac}) that
\begin{equation}
v_n(t,x,y)=v_n(t,y,x).
\end{equation}
We also deal with the de-localized model. Let $v_n(t,\cdot)$ be the solution of (\ref{PAM}) with the initial condition $v_n(0,\cdot)\equiv 1$. It is trivial that $v_n(t,y)=\sum_{x\in\Sigma_n}v_n(t,x,y)$ and $v_n(t,y)$ admits the Feynman-Kac representation
\begin{equation}
v_n(t,y)=\mathbb{E}_{y}[\exp(\int_0^t \xi_n(X_s)ds)].
\end{equation}
Equation (\ref{PAM}) and its variants are often called the parabolic Anderson model. PAM originates as the parabolic version of the Anderson localization problem and has found a wide range of applications such as chemical kinetics, magnetism, turbulence and population dynamics, the last being one of the motivations of this article. PAM is also attractive for mathematicians since it yields precise solutions based on the Feynman-Kac representation and the spectral analysis of the Hamiltonian operator $\kappa\Delta+\xi$. We refer the readers to the recent book \cite{W16} and the references therein for the applications of PAM and a survey of mathematical results. The main feature of (\ref{PAM}) is the competition between the diffusion term that flattens the solution and the potential part that creates peaks. A feature of this competition is the \textit{intermittency effect}, namely, the total mass of the solution $v_n$ is carried by a few separated regions with small diameters. Indeed, the rigorous mathematical research on PAM started with the seminal paper \cite{GM90} in which intermittency was proved under minimal conditions on the potential. In a follow-up paper \cite{GM98} the same authors gave a description of the shape of relevant islands in terms of a variational problem in the growth rate. The potentials considered in \cite{GM98} consisted of distributions with upper tails that are double exponential or slightly heavier/lighter. The asymptotic size of the islands are finite for double exponential tails that and shrinks to a single site for heavier tails. This geometric picture was made precise in \cite{GKM07}. The growth of the solution for much heavier tails has random first order terms, and results in this direction was achieved in \cite{HMS08} for potentials with Pareto and exponential distributions. Later, \cite{KLMS09} proved single site localization for the same kind of potentials and The evolution of the localization point was investigated in the context of aging in \cite{MOS11}.

In this work we consider PAM on the $n$-dimensional hypercube for class of potentials that lead to single site localization. We will describe the growth of the solution and provide localisation results. Our point of view focuses on solutions starting from the site of an extremal potential and how the growth and localisation change with time, observing a phase transition in time. Moreover, we will explore the fact the normalized solution of PAM corresponds to a mutation-selection model and explain our localisation results in term of the latter.

We want to mention that the state space for all the results we have mentioned from the literature is the $d$-dimensional lattice. There are only a few work about PAM on different graphs, one being \cite{FM90} where the authors study PAM on complete graph with exponentially distributed random potential. This work has been an inspiration for us as it also proves a phase transition on the growth depending on the time scales. One big simplicity of working on the complete graph is that the exact asymtotics of the whole spectrum of eigenvalues and eigenvectors is readily available.

Let us now briefly explain the results of this article. Our first main result is an exact description of the behaviour of the solution at the location of the $k$-th largest potential. Let $x_{1,\tn},x_{2,\tn},\dots,x_{\tn,2^n}$ denote the locations of the largest potential, second largest potential and so on. We denote by $\lambda_{1}$ the principle eigenvalue of the operator $\kappa \Delta_n+\xi_n$. For the potential we essentially assume that, for any $k\in$ fixed, almost surely $\xi_n(x_{k,\tn})\sim \theta n$ for some $\theta>0$ and the gap between the extremal points $\xi_n(x_{1,\tn})-\xi_n(x_{k,\tn})$ stays order of (random) constant (see Section 1.2). The behaviour of $v_n(t_n,x_{k,\tn})$ goes through a transition on time scales of order $n\log n$. To this end let $c_n=\frac{1}{2} n\log n$ and let $t_n/c_n\to\alpha \in [0,\infty]$. We prove that (see Section \ref{sec1.4}) for $\alpha<(\xi(x_{1,\tn})-\xi(x_{k,\tn}))^{-1}$
\begin{equation}
v_n(t_n,x_{k,\tn})\sim \exp\big\{(\xi(x_{k,\tn})-\kappa)t_n\big\},
\end{equation}
and for $\alpha > (\xi(x_{1,\tn})-\xi(x_{k,\tn}))^{-1}$ 
\begin{equation}\label{second}
v_n(t_n,x_{k,\tn})\sim \exp\big\{\lambda_1 t_n\big\}\exp\big\{-c_n(1+o(1))\big\}.
\end{equation}
Hence, in the short time regime the solution at all the high peaks grows like a system without diffusion, more precisely, when the potential is shifted down by $\kappa$ and the diffusion is removed. However, on the long time regimes, with the observation that (see Lemma \ref{lemmaAna2})
\begin{equation}
\lambda_1=\xi_{1,\tn}-\kappa+O(1/n^2),
\end{equation}
we see that the growth is much higher. We also mention that the second term in (\ref{second}) is the decay of the principle eigenfunction at $x_{k,\tn}$. The class of potentials we consider involves the Random Energy Model (REM) of spin glasses introduced in \cite{D81}, where the potential field is formed by i.i.d. Gaussian random variables with mean 0 and variance $n$.

Now we describe the mutation-selection model on the hypercube with random fitness landscape and explain how it is connected to PAM.  The mutation-selection model is given by the solution $u_n(t,\cdot,y)$ of the following PDE
\begin{equation}\label{mut-sel}
u_n(t,x,y)=\kappa\Delta_n u_n(t,x,y) + \left[\xi_n(x) -\bar{\xi}(t)\right]u_n(t,x,y) ,\quad t\geq 0,\; x\in\Sigma_n
\end{equation}
with the localized initial condition $u_n(t,\cdot,y)=\delta_y(\cdot)$,
where $\bar{\xi}(t)$ is the mean fitness
\begin{equation}\label{meanfitness}
\bar{\xi}(t):=\sum_{x\in\Sigma_n}u_n(t,x,y)\xi_n(x).
\end{equation}
Let us briefly explain the biological meaning of the mathematical objects appearing in (\ref{mut-sel}). Haploid genotypes are identified with linear arrangement of $n$ sites $x=(x^{(1)},\dots,x^{(n)})$ with each site taking values $-1$ or $+1$. In the multilocus context sites correspond to \textit{loci} and the variables $x^{(i)}$ to \textit{alleles}. In the context of molecular evolution, $x$ corresponds to a DNA (or RNA) sequence where the nucleotides are lumped into purines (say, $+1$) and pyrimidines (say, $-1$). In biology literature the hypercube $\Sigma_n$ is usually called the sequence space. Then the mutation-selection model given in (\ref{mut-sel}) describes the evolution of an infinite population of haploids that experience only mutation and selection. The population evolves in continuous time (non-overlapping generations) with mutation and selection occurring independently (parallel). $\xi_n(x)$ is the Malthusian fitness of \textit{type} $x$ and form a fitness landscape, which in our case is random. Site mutations happen with rate $\kappa/n$ (hence, a total rate of $\kappa$). From (\ref{meanfitness}) it follows that $\sum_x u_n(t,x,y) =1$, and $u_n(t,x,y)$ corresponds to the frequency of type $x$ under this evolution. Finally, note that the localized initial condition means that initially the population consists of only type $y$. The competition between diffusion and potential discussed in PAM translates as competition between mutation and selection, two driving forces of Darwinian evolution. The mutation-selection model dates back to Wright \cite{W32}. We refer readers to the classical book \cite{CK70} for an introduction to population genetics and to \cite{BG99} for an excellent survey that involves the statistical physics methods used to solve mutation-selection models for a wide range of landscapes. 

The motivation to consider a random fitness landscape is the following. Realistic landscapes are expected to be complex with structures such as valleys and hills \cite{VK14}.  Random fitness landscapes naturally form a class of complex landscapes. The first obvious choice, that is an i.i.d.~landscape, is also known as the {\em House of Cards model} and was introduced by Kingman \cite{K78}.

It is well-known that (see e.g. \cite{M76}) the linear system $v_n$ can be transformed to $u_n$ via normalization by its total mass, that is,
\begin{equation}
u_n(t,x,y)=\frac{v_n(t,x,y)}{v_n(t,y)}.
\end{equation}
In a way $v_n(t,x,y)$ can be thought as \textit{absolute} frequencies.
Hence, behaviour of the mutation-selection model is related to the localization properties of the PAM model. Indeed, we will prove that (see Section \ref{sec1.3}) the phase transition occurring exhibited in growth rates of $v_n$ translates to the behaviour of $u_n$. Namely, on short time scales $u_n(t_n,x_{k,\tn},x_{k,\tn})\to 1$, whereas, on long time scales $u_n(t_n,x_{1,\tn},x_{k,\tn})\to 1$. In other words, a population initially consisted of $x_{k,\tn}$ type individuals stays that way for a certain threshold in time, after which it is invaded by the best fit type $x_{k,\tn}$. 

The coupled model where the reproduction events are followed by mutation is known as quasispecies model, introduced by Eigen in \cite{Eig71}. Main feature of this model is the existence of a \textit{error threshold}, that is, for a {\it single peak} landscape (a \textit{master} sequence has a fitness $\sigma>1$ and the rest has the same fitness of 1) in the limit as the genome length $n\to\infty$ and time $t\to\infty$ the population is essentially randomly distributed over the space if the mutation rate is above a certain value, whereas for the mutation rates below this critical value the population consists of individuals close to the master sequence, what Eigen calls a \textit{quasispecies}. Similar results were proven in \cite{FPS93} and \cite{FP97} for the REM landscape. We have to emphasize that our model is actually not in the direction of these results. In the quasispecies models we have mentioned the mutation rate and the fitness at highest peak is on the same scale. In our case the fitness of the highest peak is on the scale of $n$ while the mutation rate is kept at constant. Hence, we do not have the quasispecies picture. Instead, what we focus on is studying the evolution in intermediate time scales, that is, before the equilibrium. The phase transition we observe is on the time scale of observation rather than on the mutation rate.   

In the rest of this section we describe precisely the potentials we use, then we state our main results on the growth rates and localisation, and finally we quickly show that REM landscape satisfies our assumptions on the potential field.

{\it Notation.} Throughout the paper we use the notations $o,O,\Theta,\ll,\gg,\sim$ for any two sequences $f_n,g_n$ as follows. We write $f_n=o(g_n)$, $f_n\ll g_n$ or $g_n\gg f_n$ if $f_n/g_n\rightarrow 0$ as $n\to\infty$; $f_n=O(g_n)$ if $\limsup_{n\to\infty}f_n/g_n<\infty$; $f_n=\Theta(g_n)$ if there are positive constants $C_1,C_2$ such that $C_1 g_n\leq f_n\leq C_2 g_n$ for all $n$ large enough; and $f_n\sim g_n$ if $f_n/g_n\to 1$ as $n\to \infty$. Moreover, For constants in our estimates we use the letter $C$ freely as long as it does not appear at the end result.
\subsection{The potential}\label{sec1.2}

\newcommand{\bxi}{{\xi}_n}
\newcommand{\bota}{{\eta}_n}

For each $n$, ${\xi}_n=\{\xi_n(x):x\in\Sigma_n\}$ is a collection of i.i.d. random variables whose common cumulative distribution function is denoted by $G_n$. We assume that $G_n$ is continuous, i.e. $\xi_n(x)$ has no atoms. We define
\begin{equation}
\varphi_n(r):=\log \frac{1}{1-G_n(r)},\quad  r\in\mathbb{R},
\end{equation}
and its left-continuous inverse
\begin{equation}
\psi_n(s):=\min\{r:\varphi_n(r)\geq s\},\quad s>0.
\end{equation}
Let ${\eta}_n=\{\eta_n(x):x\in\Sigma_n\}$ be an i.i.d. field of mean 1 exponential random variable. Then $\psi_n(\eta_n)\overset{d}=\bxi$, and from now on we assume without loss of generality that $\bxi=\psi_n(\bota)$. Note that since $\psi_n$ is strictly increasing the sites ordered according to their potentials coincide for the two fields. More precisely, we can label the vertices of $\Sigma_n$ by $x_{1,2^n},\dots,x_{2^n,2^n}$ so that
\begin{equation}
\xi_n(x_{1,2^n}):=\xi_{1,2^n}>\xi(x_{2,2^n}):=\xi_{2,2^n}>\cdots >\xi_n(x_{2^n,2^n}):=\xi_{2^n,2^n}
\end{equation}
and
\begin{equation}
\eta_n(x_{1,2^n}):=\eta_{1,2^n}>\eta_n(x_{2,2^n}):=\eta_{2,2^n}>\cdots >\eta_n(x_{2^n,2^n}):=\eta_{2^n,2^n}.
\end{equation}
Note that the above inequalities are strict because $G_n$ is continuous. Let $\sigma_i$, $i\in\N$, be an independent sequence of random variables where $\sigma_i$ is exponentially distributed with intensity $i$. It is well-known that (see e.g. Section I.6 of \cite{F66}) 
\begin{equation}
(\eta_1,\eta_2,\dots,\eta_{2^n})\overset{d}=(\sigma_1+\cdots+\sigma_{2^n},\sigma_2+\cdots+\sigma_{2^n},\dots, \sigma_{2^n}).
\end{equation}
From now on we describe the field $\bota$ (and in turn the field $\bxi$) through the sequence $(\sigma_i,\; i\in\N)$. Namely, $\bota$ is given by its order statistics $(\eta_{1,2^n},\eta_{2,2^n},\dots,\eta_{2^n,2^n})$ coupled to $(\sigma_i,\; i\in\N)$ via $\eta_{i,2^n}=\sigma_i+\cdots+\sigma_{2^n}$. We denote by $P$ and $E$ the distribution and expectation in this common probability space, respectively. 

Since
\begin{equation}
P(\eta_{1,2^n}\geq nc\log 2 )\leq 2^{-n(c-1)},\quad \forall c>1,
\end{equation}
and
\begin{equation}
P(\eta_{1,2^n}\leq n c\log 2 )\leq \exp(-2^{n(1-c)}),\quad \forall c<1,
\end{equation}
by an application of Borel-Cantelli lemma $P$-a.s.
\begin{equation}\label{expfieldgrow}
\lim_{n\to\infty} \frac{\eta_{1,2^n}}{n}=\log 2.
\end{equation}
We have $\eta_{1,2^n}-\eta_{k,2^n}=\sigma_1+\cdots+\sigma_{k-1}$. Hence, $P$-a.s. for any $k\in\N$
\begin{equation}
\lim_{n\to\infty} \frac{\eta_{k,2^n}}{n}=\log 2.
\end{equation}
Therefore, the extremes of the field $\bota$ all grow like $n\log 2 $ and the gap between extremal points are (random) constants, i.e., 
\begin{equation}\label{expgap}
\eta_{k,2^n}-\eta_{l,2^n}=\sigma_k+\cdots\sigma_{l-1}, \quad \mbox{ for any }  k<l.
\end{equation} 

We now list our assumptions for the field $\bxi$. The first set of assumptions is about the extremes of the field and concerns only the right-tail of the distribution of $\xi_n$ in terms of $\psi_n$. The following assumption identifies the growth rate of the extremes.

{\bf Assumption $(R1)$}
For any $a>0$
\begin{equation}
\psi_n(an)\sim f(a)n
\end{equation}
where $f:\mathbb{R}^+\to\mathbb{R}^+$ is a strictly increasing function. We define $\theta:=f(\log 2)$.

Hence, by (\ref{expfieldgrow}), $\xi_{1,2^n}$ grows like $\theta n$. The choice of this growth rate is arbitrary but it makes the representation cleaner and this is the actual case for REM. 

Our second assumption on the right-tail is more crucial, it guarantees that, like in the exponential field, the gaps between extremes are order of (random) constants.

{\bf Assumption $(R2)$}
For any sequence $s_n\sim \theta n$, for any $c\in\mathbb{R}$,
\begin{equation}
\psi(s_n+c)-\psi(s_n)\to g(c),
\end{equation}
where $g:\mathbb{R}\to\mathbb{R}$ is such that $g(c)\not= 0$ for any $c\not= 0$.

Therefore, by (\ref{expfieldgrow}) and (\ref{expgap}), $P$-a.s.
\begin{equation}
\xi_{k,\tn}-\xi_{l,\tn}=g(\sigma_{k}+\cdots+\sigma_{l-1})+o(1), \quad \mbox{ for any }  k<l.
\end{equation}
Recall that $g(c)>0$ for $c>0$, that is, the gap above does not vanish. For convenience we define
\begin{equation}
\xi_{k,l}:=g(\sigma_k+\cdots+\sigma_{l-1}), \mbox{ for } k<l,
\end{equation}
and set $\xi_{k,l}=0$ for $k\geq l$. Note that $\xi_{k,l}$ does not depend on $n$.

For further reference, we sum up the implications of Assumptions $(R1)$ and $(R2)$ in a lemma
\begin{lemma}\label{lemma1}
	Let assumptions $(R1)$ and $(R2)$ be satisfied. Then $P$-a.s. for any $k,l\in\N$
	\begin{itemize}
		\item[(i)]
		\begin{equation}
		\lim_{n\to\infty}\frac{\xi_{k,\tn}}{n}=\theta;
		\end{equation}
		\item[(ii)]
		\begin{equation}
		\xi_{k,\tn}-\xi_{l,\tn}=\xi_{k,l}+o(1), \quad \mbox{ for any }  k<l.
		\end{equation}
	\end{itemize}
\end{lemma}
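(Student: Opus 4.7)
\emph{Approach.} My plan is to transfer the already-established a.s.\ asymptotics for the exponential field $\bota$ through $\psi_n$ using Assumptions $(R1)$ and $(R2)$. The raw material is the observation, made just before the lemma's statement, that $P$-a.s.\ one has $\eta_{k,\tn}/n\to \log 2$ for every fixed $k\in\N$ and $\eta_{k,\tn}-\eta_{l,\tn}=\sigma_k+\cdots+\sigma_{l-1}$ for all $k<l$. Since $\xi_{k,\tn}=\psi_n(\eta_{k,\tn})$ and $\psi_n$ is non-decreasing (it is the left-continuous inverse of the non-decreasing $\varphi_n$), each part of the lemma should follow once the corresponding structural assumption is applied to these asymptotics.

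\emph{Proof of (i).} I would work on the $P$-full event where $\eta_{k,\tn}/n\to\log 2$. For $\varepsilon>0$ and $n$ large we have $(\log 2 -\varepsilon)n\leq \eta_{k,\tn}\leq (\log 2+\varepsilon)n$, whence monotonicity of $\psi_n$ gives
\begin{equation*}
\psi_n\big((\log 2 -\varepsilon)n\big)\leq \xi_{k,\tn}\leq \psi_n\big((\log 2+\varepsilon)n\big).
\end{equation*}
Dividing by $n$ and invoking $(R1)$, which asserts $\psi_n(an)/n\to f(a)$ for each fixed $a>0$, I would then conclude
\begin{equation*}
f(\log 2-\varepsilon)\leq \liminf_{n\to\infty}\xi_{k,\tn}/n\leq \limsup_{n\to\infty}\xi_{k,\tn}/n\leq f(\log 2+\varepsilon),
\end{equation*}
and finally send $\varepsilon\downarrow 0$, using the continuity of the strictly increasing $f$ at $\log 2$, to deduce $\xi_{k,\tn}/n\to f(\log 2)=\theta$.

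\emph{Proof of (ii).} Fix $k<l$ and set $c:=\sigma_k+\cdots+\sigma_{l-1}$, which is $P$-a.s.\ a finite constant independent of $n$, together with $s_n:=\eta_{l,\tn}$, so that $s_n/n\to \log 2$ $P$-a.s.\ and $\eta_{k,\tn}=s_n+c$. Writing
\begin{equation*}
\xi_{k,\tn}-\xi_{l,\tn}=\psi_n(s_n+c)-\psi_n(s_n),
\end{equation*}
I would argue that on a typical realisation of the $\sigma_i$'s both $c$ and $(s_n)$ can be regarded as deterministic with the correct extreme-statistics scaling, so that $(R2)$ applies and delivers $\xi_{k,\tn}-\xi_{l,\tn}\to g(c)=\xi_{k,l}$, which is exactly (ii).

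\emph{Main obstacle.} The only genuine subtlety lies in part (i): one needs the continuity of $f$ at $\log 2$ to close the sandwich, since (R1) is stated pointwise in $a$ while the argument is applied to a random input with $o(n)$ fluctuation around $(\log 2)n$. This continuity is natural for the class of $G_n$ considered in the paper and is immediate for REM, where $f(a)=\sqrt{2a}$. By contrast, part (ii) is essentially a direct reading of $(R2)$ once one recognises that on the almost-sure event in question both $c$ and $(s_n)$ may be treated as deterministic, so no uniformity in $s_n$ beyond what is already built into $(R2)$ is needed.
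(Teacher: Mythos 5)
Your reasoning follows the same route as the paper, which never gives a free-standing proof of Lemma~\ref{lemma1}: the lemma is presented as a summary of the preceding discussion of the coupled field $\eta_n$, (R1) and (R2), and your two steps are precisely that discussion made explicit. Part~(ii) is correct: on the $P$-full set where the $\sigma_i$ are finite and $\eta_{l,\tn}/n\to\log 2$, the sequence $s_n=\eta_{l,\tn}$ and the number $c=\sigma_k+\cdots+\sigma_{l-1}$ are deterministic objects to which $(R2)$ applies, giving $\psi_n(s_n+c)-\psi_n(s_n)\to g(c)=\xi_{k,l}$. (You have silently corrected an apparent typo: $(R2)$ as written requires $s_n\sim\theta n$, but the application — both here and in the paper's own text after $(R2)$, and in the REM verification — is at the $\eta$-scale, i.e.\ $s_n\sim(\log 2)n$; your $s_n/n\to\log 2$ is the intended condition.)

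For part~(i) you correctly observe that a sandwich $\psi_n((\log 2-\varepsilon)n)\le\xi_{k,\tn}\le\psi_n((\log 2+\varepsilon)n)$ closes only if $f$ is continuous at $\log 2$, which $(R1)$ does not assert. That is an honest flag, but the extra hypothesis is not actually needed, and the fix is available from what the paper already sets up. The point is that the fluctuation of $\eta_{k,\tn}$ around $n\log 2$ is in fact a.s.\ $O(1)$, not merely $o(n)$: from the coupling $\eta_{1,\tn}=\sigma_1+\cdots+\sigma_{2^n}$ and $\sum_i\operatorname{Var}(\sigma_i)=\sum_i i^{-2}<\infty$, Kolmogorov's two-series theorem gives that $\eta_{1,\tn}-H_{2^n}$ converges a.s., and since $H_{2^n}=n\log 2+\gamma+o(1)$, the difference $\eta_{k,\tn}-n\log 2$ converges a.s.\ to a finite random limit for every fixed $k$. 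On that full set choose a (realization-dependent) integer $K$ with $|\eta_{k,\tn}-n\log 2|\le K$ for all large $n$; by monotonicity of $\psi_n$ and $(R2)$ applied to the deterministic sequence $s_n=n\log 2$ with $c=\pm K$,
\begin{equation}
\psi_n(n\log 2)+g(-K)+o(1)\;\le\;\xi_{k,\tn}\;\le\;\psi_n(n\log 2)+g(K)+o(1),
\end{equation}
and then $(R1)$ applied to the fixed value $a=\log 2$ gives $\psi_n(n\log 2)/n\to\theta$, whence $\xi_{k,\tn}/n\to\theta$. This bypasses continuity of $f$ entirely and uses only $(R1)$ and $(R2)$ as stated. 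So your proof is essentially the paper's; the only genuine refinement is to replace the $\varepsilon$-sandwich in (i) by the $O(1)$-fluctuation argument above, which removes the unstated continuity assumption.
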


Our last assumption concerns the left tail of the distribution of $\bxi$.

{\bf Assumption $(L)$}
There exists a sequence $l_n\ll n$ for which
\begin{equation}
\sum_{n\in \N}nG_n(-l_n)<\infty.
\end{equation}

Essentially, above assumption yields that there are enough path between extremal points that avoid sites with potential smaller than $-l_n$. Moreover, it guarantees that the neighbours of extremal points also have potential not smaller than $-l_n$. 

Now we are ready to formulate our results rigorously.

\subsection{Growth Rates}\label{sec1.4}

Let 
\begin{equation}
c_n:=\frac{1}{2}n\log n,
\end{equation}
and consider time scales $t_n$ such that
\begin{equation}
\frac{t_n}{c_n}\underset{n\to\infty}\longrightarrow \alpha\in[0,\infty].
\end{equation}
We denote by $\lambda_{1}$ the principle eigenvalue of the operator $\kappa \Delta_n+\xi_n$. Note that with a slight abuse of notation we do not use $n$ in $\lambda_1$.
\begin{theorem}\label{thrmGrow}
	Let Assumptions $(R1), (R2)$ and $(L)$ be satisfied. Then $P$-a.s. for any $k\in\N\setminus\{1\}$ as $n\to\infty$
	\begin{equation}
	v_n(t_n,\xk{k,2^n})\sim \left\{
	\begin{array}{ll}
	\exp\Big\{\big(\xi_{k,\tn}-\kappa\big) t_n\Big\} & \mbox{ if }\alpha<1/\xi_{1,k},\\
	&\\
	\exp\Big\{\lam{1}t_n-c_n(1+o(1))\Big\}& \mbox{ if }\alpha>1/\xi_{1,k}.
	\end{array}\right.
	\end{equation}
	Moreover, for any $\alpha\in[0,\infty]$
	\begin{equation}\label{thrmGrowx1}
	v_n(t_n,\xk{1})\sim \exp\big\{{\lam{1}t_n}\big\}.
	\end{equation}
\end{theorem}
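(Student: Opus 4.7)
The plan is to combine the Feynman--Kac formula, which delivers clean lower bounds on short time scales, with the spectral expansion
\begin{equation}
v_n(t,y)=\sum_{i\geq 1}e^{\lambda_i t}\phi_i(y)\langle\phi_i,\mathbf 1\rangle,
\end{equation}
where $\lambda_1>\lambda_2\geq\cdots$ and $\{\phi_i\}$ is an orthonormal basis of eigenvectors of $\kappa\Delta_n+\xi_n$, for the long-time regime and matching upper bounds. The entire argument reduces to a sharp localisation statement for the top eigenpairs: for each fixed $i,j$ one wants $\lambda_i=\xi_{i,\tn}-\kappa+O(n^{-2})$ (a refinement of Lemma \ref{lemmaAna2}), $\phi_i(x_{i,\tn})=1-o(1)$, $\langle\phi_i,\mathbf 1\rangle=1-o(1)$, and the off-diagonal decay
\begin{equation}
\phi_i(x_{j,\tn})=\exp\{-c_n(1+o(1))\}\qquad (j\neq i).
\end{equation}
I would obtain this last estimate by a resolvent/path-counting argument: expanding the Neumann series of $(\kappa\Delta_n+\xi_n-\lambda_i)^{-1}$ along hypercube paths from $x_{i,\tn}$ to $x_{j,\tn}$, the dominant contribution comes from shortest paths of length $d\sim n/2$, each edge carrying a factor $(\kappa/n)/(\theta n)\sim 1/n^2$; Stirling on the $d!$ shortest paths produces $d!\,(1/n^2)^d=\exp\{-c_n(1+o(1))\}$. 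Assumption $(L)$ rules out deep intermediate wells and Assumption $(R2)$ converts the $O(1)$ potential gaps at the extremal endpoints into the right resolvent denominators.

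Given these spectral ingredients, \eqref{thrmGrowx1} is immediate: the $i=1$ term of the spectral sum evaluated at $y=x_{1,\tn}$ equals $e^{\lambda_1 t_n}(1+o(1))$, while each fixed-$i$ term with $i\geq 2$ picks up the factor $e^{-c_n(1+o(1))}$ from $\phi_i(x_{1,\tn})$, and the tail $\sum_{i\text{ large}}$ is controlled by $\lambda_i\leq\xi_{1,\tn}-\kappa-\xi_{1,2}+o(1)$ combined with Cauchy--Schwarz against the orthonormality identities $\sum_i\phi_i(x_{1,\tn})^2=1$ and $\sum_i\langle\phi_i,\mathbf 1\rangle^2=2^n$. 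The net remainder is $o(e^{\lambda_1 t_n})$ for every $\alpha\in[0,\infty]$.

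For $k\geq 2$, in the short-time regime $\alpha<1/\xi_{1,k}$ the Feynman--Kac lower bound restricted to the constant trajectory $X_s\equiv x_{k,\tn}$ gives $v_n(t_n,x_{k,\tn})\geq e^{-\kappa t_n}e^{\xi_{k,\tn} t_n}$. The matching upper bound follows from the spectral sum: the term $i=k$ equals $e^{(\xi_{k,\tn}-\kappa)t_n}(1+o(1))$; every other fixed-$i$ term carries the factor $e^{-c_n(1+o(1))}$, and in particular $i=1$ contributes $\exp\{\lambda_1 t_n-c_n(1+o(1))\}$ which, using $\lambda_1-\xi_{k,\tn}+\kappa=\xi_{1,k}+o(1)$ from Lemma \ref{lemma1}, is $o(e^{(\xi_{k,\tn}-\kappa)t_n})$ precisely when $\alpha<1/\xi_{1,k}$; the large-$i$ tail is handled exactly as above. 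In the complementary regime $\alpha>1/\xi_{1,k}$ the same accounting shows that the $i=1$ term becomes dominant, producing the second asymptotic.

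The main obstacle is establishing $\phi_i(x_{j,\tn})=e^{-c_n(1+o(1))}$: both the leading exponential rate and the correct pre-factor require a careful enumeration of near-shortest paths between two (randomly placed) extremal sites, with uniform control of intermediate potential values via Assumption $(L)$, while Assumption $(R2)$ pins down the endpoint resolvent denominators. A secondary but more standard issue is the large-$i$ tail of the spectral sum, resolved by combining a Rayleigh--Ritz bound on sub-principal eigenvalues with Cauchy--Schwarz against the orthonormality identities.
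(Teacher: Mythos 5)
Your outline is structurally different from the paper's argument, and it contains two genuine gaps that I do not see how to close without essentially reverting to the paper's route.

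\textbf{Gap 1: the tail of the spectral sum for short times.} Your bound on the remainder of the full spectral expansion is
\begin{equation}
\Big|\sum_{i\ \mathrm{large}} e^{\lambda_i t_n}\phi_i(y)\langle\phi_i,\mathbf 1\rangle\Big|
\leq e^{\lambda_2 t_n}\Big(\sum_i \phi_i(y)^2\Big)^{1/2}\Big(\sum_i\langle\phi_i,\mathbf 1\rangle^2\Big)^{1/2}
= e^{\lambda_2 t_n}\, 2^{n/2},
\end{equation}
so the remainder is $o(e^{\lambda_1 t_n})$ only when $(\lambda_1-\lambda_2)t_n\gg n$, i.e.\ $t_n\gg n$ (the gap $\lambda_1-\lambda_2$ is an order-one random constant here). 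But the theorem asserts $v_n(t_n,x_{1,\tn})\sim e^{\lambda_1 t_n}$ for \emph{all} $\alpha\in[0,\infty]$, including $t_n=O(1)$ or $t_n=\log n$, where the $2^{n/2}$ factor is fatal. The paper avoids this entirely: rather than a spectral sum damped by $t_n$, it uses the time-independent domination
$v_n(t,x,x_{1,\tn})\leq v_n(t,x_{1,\tn},x_{1,\tn})\,\nu_1(x)\,\norm{\nu_1}_2^2$
(coming from Lemma~\ref{lemmaspectbd}, the analogue of Theorem 4.1 of \cite{GKM07}), which makes the argument uniform in $t$ since only $\sum_{x\neq x_1}\nu_1(x)\to 0$ is needed.

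\textbf{Gap 2: localisation of $\phi_i$ for $i\geq 2$.} The paper never proves concentration or off-diagonal decay for the eigenvectors $\phi_i$ of the \emph{full} operator $\kappa\Delta_n+\xi_n$; what it proves (Lemma~\ref{lemmaAna}) is the analogous statement for the principal eigenfunctions $\nu_{i,l}$ of the operator with Dirichlet conditions on $\Gamma_l\setminus\{x_{i,\tn}\}$. These are genuinely different objects. Crucially, $\nu_{i,l}$ is a positive (Perron--Frobenius) eigenfunction with an explicit Feynman--Kac representation, (\ref{repeigvec}), and your path-counting/Stirling argument is exactly the right tool there. But $\phi_i$ for $i\geq 2$ changes sign and has no such probabilistic representation, so the Neumann-series route does not deliver $\phi_i(x_{j,\tn})=e^{-c_n(1+o(1))}$ or $\phi_i(x_{i,\tn})=1-o(1)$ directly; one would need a separate (perturbative) argument, which you do not supply. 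The paper's decomposition $v_n=\omega_k+\tilde\omega_k$ (paths that hit $\Gamma_{k-1}$ vs.\ do not) and the funnel inequality of Lemma~\ref{lemmaspectbd} are precisely the mechanism that reduces everything to estimates on the positive Dirichlet eigenfunctions $\nu_{i,l}$, where your resolvent heuristics become rigorous.
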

So on the short time scales the solution grows by the single peak, which can be seen as the model with no diffusion and potential $\xi_{k,2^n}-\kappa$, and on the other hand, for longer time scales the growth is larger which is determined by the principle eigenvalue and a correction term given by the decay of the principle eigenfunction at $x_{k,2^n}$ (see Lemma \ref{lemmaAna} below). We also mention that (see Lemma \ref{lemmaAna2} below)
	\begin{equation}
	\lam{1}=\xi_{1,2^n}-\kappa+\Theta(1/n^2).
	\end{equation}
Let us consider the time scale of phase transition and for simplicity take $t_n=\alpha c_n$. Then the growth rate, to be precise the ratio of the term in the exponentials to the time scale $t_n$, is $\xi_{k,2^n}-\kappa$ for $\alpha<1/\xi_{1,k}$ and $\xi_{1,2^n}-\kappa-1/\alpha+o(1)$ for $\alpha>1/\xi_{1,k}$. Since $\xi_{1,2^n}-\xi_{k,2^n}=\xi_{1,k}+o(1)$ the phase transition is second order, see Figure \ref{plotgrowth}.

\begin{figure} [ht]
	\begin{center}
		\centering
	\includegraphics[scale=0.8, width=8cm]{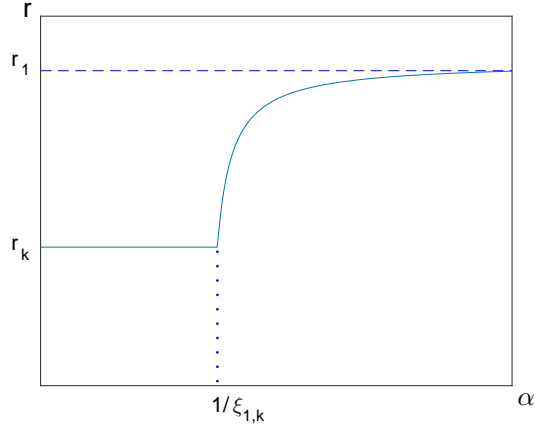}
	\end{center}
	\caption{Plot of growth rate with respect to time. $\alpha$-axis is time normalized by $c_n$, that is, $t_n=\alpha c_n$. $r$-axis is the growth rate shifted by $\theta n$. Here $r_1=\xi_{1,2^n}-\kappa$ and $r_k=\xi_{k,2^n}-\kappa$.}
	\label{plotgrowth}
\end{figure}

\subsection{Localization }\label{sec1.3}

\begin{theorem}\label{thrmLoc}
	Let Assumptions $(R1), (R2)$ and $(L)$ be satisfied. Then $P$-a.s. for any $k\in\N\setminus\{1\}$ as $n\to\infty$
	\begin{itemize}
		\item[(i)] if $\alpha<1/\xi_{1,k}$
		\begin{equation}
		u_n(t_n,\xk{k,\tn},\xk{k,\tn})\longrightarrow 1;
		\end{equation}
		\item[(ii)] if $\alpha> 1/\xi_{1,k}$
		\begin{equation}
		u_n(t_n,\xk{1,\tn},\xk{k,\tn})\longrightarrow 1.
		\end{equation}
	\end{itemize}
	Moreover, for any $\alpha\in[0,\infty]$ $P$-a.s. as $n\to\infty$
	\begin{equation}\label{thrmLocx1}
	u_n(t_n,\xk{1,\tn},\xk{1,\tn})\longrightarrow 1.
	\end{equation}
\end{theorem}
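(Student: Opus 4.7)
The plan is to exploit the identity $u_n(t_n, x, y) = v_n(t_n, x, y)/v_n(t_n, y)$ and to note that the trivial bound $v_n(t_n, y) = \sum_z v_n(t_n, z, y) \geq v_n(t_n, x, y)$ already gives $u_n(t_n, x, y) \leq 1$; hence only matching lower bounds for the two-point kernel need to be produced. I treat the two regimes separately, mirroring the phase transition of Theorem \ref{thrmGrow}.

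In the short-time regime $\alpha < 1/\xi_{1,k}$ of part (i), I lower-bound the Feynman-Kac expression for $v_n(t_n, \xk{k,\tn}, \xk{k,\tn})$ by restricting to the event that $X_s = \xk{k,\tn}$ for all $s \le t_n$. Since the walk with generator $\kappa\Delta_n$ leaves any vertex at rate $\kappa$, this event has probability $e^{-\kappa t_n}$ and contributes
\[
v_n(t_n, \xk{k,\tn}, \xk{k,\tn}) \geq e^{-\kappa t_n}\exp(\xi_{k,\tn} t_n) = \exp((\xi_{k,\tn}-\kappa) t_n).
\]
Combined with $v_n(t_n, \xk{k,\tn}) \sim \exp((\xi_{k,\tn}-\kappa) t_n)$ from Theorem \ref{thrmGrow}, the sandwich yields (i). The \emph{moreover} statement for $k=1$ follows from the same staying bound together with $v_n(t_n, \xk{1,\tn}) \sim e^{\lam{1} t_n}$ and Lemma \ref{lemmaAna2}, namely $\lam{1} = \xi_{1,\tn}-\kappa + O(1/n^2)$: for any finite $\alpha$, $t_n = O(c_n) = O(n\log n)$ absorbs the $O(1/n^2)$ correction into $e^{o(1)}$, and the case $\alpha = \infty$ is covered by the spectral argument below.

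For the long-time regime $\alpha > 1/\xi_{1,k}$ of part (ii), I switch to the spectral decomposition of the self-adjoint operator $\kappa\Delta_n + \xi_n$: if $(\phi_i)$ denote $L^2$-orthonormal eigenvectors with eigenvalues $\lam{1} > \lam{2} \geq \cdots$, then
\[
v_n(t, x, y) = \sum_i e^{\lam{i} t}\phi_i(x)\phi_i(y), \qquad v_n(t, y) = \sum_i e^{\lam{i} t}\phi_i(y)\Big(\sum_z\phi_i(z)\Big).
\]
By Lemma \ref{lemmaAna2} and a Rayleigh-quotient perturbation, $\lam{1}-\lam{2}$ is of order the random constant $\xi_{1,2}>0$, so for $t_n\to\infty$ the principal term dominates both sums and
\[
u_n(t_n, \xk{1,\tn}, \xk{k,\tn}) = \frac{\phi_1(\xk{1,\tn})}{\sum_z\phi_1(z)}(1+o(1)).
\]
The needed asymptotic $\sum_z\phi_1(z) \sim \phi_1(\xk{1,\tn})$ will follow from Lemma \ref{lemmaAna}: iterating the eigenvalue equation $\phi_1(y) = (\kappa/n)(\lam{1}+\kappa-\xi_n(y))^{-1}\sum_{z\sim y}\phi_1(z)$ outward from $\xk{1,\tn}$ shows that each additional step of Hamming distance costs a factor of order $\kappa/(n\cdot\theta n) = O(1/n^2)$, so $\sum_{d\geq 1}\binom{n}{d}(C/n^2)^d = O(1/n) \to 0$. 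The same spectral argument, applied with $y = \xk{1,\tn}$, delivers the \emph{moreover} statement in the $\alpha=\infty$ regime.

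The main obstacle will be making the uniform $O(1/n^2)$-per-step decay of $\phi_1$ rigorous at \emph{every} vertex of $\Sigma_n$, not only at the extremal sites $\xk{k,\tn}$. This is where Assumption $(L)$ enters: it excludes atypically negative potentials $\xi_n(z) \ll -n$ that could either reverse the sign of $\lam{1}+\kappa-\xi_n(z)$ or inflate $\phi_1$ along paths connecting $\xk{1,\tn}$ to the rest of the hypercube. Once Lemma \ref{lemmaAna} is established, the remaining bookkeeping, that non-principal eigenvalue terms are exponentially small in $t_n$, reduces to applying the spectral gap bound term by term, and Theorem \ref{thrmLoc} follows.
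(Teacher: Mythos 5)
Your part (i) is fine: the staying-put lower bound $v_n(t_n,\xk{k,\tn},\xk{k,\tn})\geq e^{(\xi_{k,\tn}-\kappa)t_n}$, squeezed against the total-mass asymptotics from Theorem~\ref{thrmGrow} and the trivial inequality $v_n(t_n,x,y)\leq v_n(t_n,y)$, does give (i). The same holds for the \emph{moreover} statement when $\alpha<\infty$, since $(\lambda_1-(\xi_{1,\tn}-\kappa))t_n = O(\log n/n)\to 0$. (Be aware, though, that the paper proves Theorems~\ref{thrmGrow} and~\ref{thrmLoc} simultaneously out of Lemmas~\ref{lemgrwothkk}--\ref{oldschool}; invoking Theorem~\ref{thrmGrow} as a black box is logically admissible but defers all the work to it.)

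Part (ii) has a genuine gap. You expand $v_n(t_n,\xk{1,\tn},\xk{k,\tn})$ and $v_n(t_n,\xk{k,\tn})$ in the $L^2$-orthonormal eigenbasis $(\phi_i)$ of the full operator and argue that ``for $t_n\to\infty$ the principal term dominates both sums.'' But the principal term of $v_n(t_n,\xk{k,\tn})$ is $e^{\lambda_1 t_n}\phi_1(\xk{k,\tn})\sum_z\phi_1(z)\asymp e^{\lambda_1 t_n-c_n}$ (by Lemma~\ref{lemmaAna}), whereas the only control the paper gives on the remainder of the full spectrum is the gap bound $\lambda_1-\lambda_2\geq\xi_{1,2}+o(1)$ from Lemma~\ref{lemmaAna2}. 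A Cauchy--Schwarz bound on $\sum_{i\geq 2}e^{\lambda_i t_n}\phi_i(\xk{k,\tn})\sum_z\phi_i(z)$ produces only $e^{\lambda_2 t_n}2^{n/2}$, and comparing exponents requires $(\lambda_1-\lambda_2)t_n\gtrsim c_n$, i.e.\ $\alpha>1/\xi_{1,2}$. Since $\xi_{1,2}=g(\sigma_1)\leq g(\sigma_1+\cdots+\sigma_{k-1})=\xi_{1,k}$ and $g$ is increasing, for $k\geq 3$ there is a nontrivial window $\alpha\in(1/\xi_{1,k},\,1/\xi_{1,2})$ in which this crude bound is simply false-to-decide, so the argument does not reach the advertised threshold. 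To sharpen it you would need localization of the \emph{non}-principal eigenvectors $\phi_2,\ldots,\phi_k$ of $\kappa\Delta_n+\xi_n$ (so that $\phi_i(\xk{k,\tn})\sum_z\phi_i(z)$ carries its own $e^{-c_n}$ suppression for $2\leq i\leq k-1$ and a constant for $i\approx k$), but Lemma~\ref{lemmaAna} is proved only for principal eigenfunctions $\nu_{i,l}$ of the \emph{Dirichlet-restricted} operators. The paper avoids needing such information: it splits $v_n=\omega_k+\tilde\omega_k$ according to whether the Feynman--Kac path hits $\Gamma_{k-1}$, and controls $\omega_k$ through the recursion of Lemma~\ref{lemkiy} in terms of the $\nu_{i,l}$ via Lemma~\ref{lemmaspectbd}, so that only principal eigendata of Dirichlet operators ever enter. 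That recursive decomposition is exactly the ingredient your proposal is missing.

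A smaller point: your per-vertex decay $\phi_1(y)\lesssim(C/n^2)^d$ omits the path-counting factor. Iterating $\phi_1(y)=\frac{\kappa}{n(\lambda_1+\kappa-\xi_n(y))}\sum_{z\sim y}\phi_1(z)$ along all decreasing-distance paths gives $\phi_1(y)\lesssim d!\,(C/n^2)^d$ for $y$ at distance $d$, and it is the cancellation $\binom{n}{d}d!\leq n^d$ in the sum over shells that rescues $\sum_{z\neq\xk{1,\tn}}\phi_1(z)=O(1/n)$. The conclusion you want is right, but the stated intermediate estimate is not.
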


\subsection{REM landscape} Our main application is the REM landscape, that is, ${\xi}_n$ is a collection of i.i.d. Gaussian random variables with variance $n$. 
\begin{proposition}
	The REM landscape satisfies Assumptions $(R1), (R2)$ and $(L)$.
\end{proposition}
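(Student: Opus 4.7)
The plan is to verify each of the three assumptions by a direct computation from the Gaussian tail. The key analytic input is the standard Mills ratio asymptotic
\[
1-\Phi(x) \;=\; \frac{\phi(x)}{x}\bigl(1+O(x^{-2})\bigr)\qquad\text{as } x\to\infty,
\]
which applied to $\xi_n\sim \mathcal{N}(0,n)$ (so $G_n(r)=\Phi(r/\sqrt n)$) yields
\[
\varphi_n(r) \;=\; \frac{r^2}{2n}+\tfrac12\log\!\Big(\frac{2\pi r^2}{n}\Big)+O(n/r^2)
\]
for $r\gg\sqrt n$. Inverting this relation asymptotically will give the form of $\psi_n$ needed in both $(R1)$ and $(R2)$.

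For $(R1)$, I would set $r=\psi_n(an)$ and solve $\varphi_n(r)=an$ by the ansatz $r=\sqrt{2a}\,n+\delta_n$ with $\delta_n\ll n$. Plugging into the Mills expansion, the leading $r^2/(2n)$ term produces $an$ exactly when $\delta_n$ absorbs the logarithmic correction, giving $\delta_n=O(\log n)$. Hence $\psi_n(an)\sim \sqrt{2a}\,n$, so that $f(a)=\sqrt{2a}$ is strictly increasing on $\mathbb{R}^+$ and $\theta=f(\log 2)=\sqrt{2\log 2}$.

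For $(R2)$, I would take a sequence $s_n\sim\theta n$ and write $r_n=\psi_n(s_n)$, $r_n'=\psi_n(s_n+c)$. Using the refined Mills inversion, both quantities have the form $r_n=\sqrt{2ns_n - A_n}$ with $A_n=n\log(2\pi r_n^2/n)+O(n^2/r_n^2)$, and similarly for $r_n'$; since $r_n$ and $r_n'$ differ only by $O(1)$ in the log term, the discrepancy of the $A$-corrections is $o(1)$. Then the identity $r_n'-r_n=\frac{2nc+o(1)}{r_n'+r_n}$ combined with $r_n\sim\sqrt{2\theta}\,n$ yields
\[
\psi_n(s_n+c)-\psi_n(s_n)\longrightarrow \frac{c}{\sqrt{2\theta}} \;=:\; g(c),
\]
a strictly increasing linear function, in particular nonzero for $c\neq 0$. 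The only subtlety here is justifying that the implicit logarithmic corrections coming from the Mills ratio do not contribute in the limit; this reduces to noting that $\log r_n - \log r_n' = o(1)$ because $r_n'/r_n\to 1$.

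For $(L)$, I would use the elementary tail bound $1-\Phi(x)\le e^{-x^2/2}$, hence by symmetry $G_n(-l_n)\le e^{-l_n^2/(2n)}$. Choosing $l_n=\sqrt{6n\log n}$ gives $l_n\ll n$ and $nG_n(-l_n)\le n\cdot n^{-3}=n^{-2}$, which is summable. I expect this to be the easiest of the three parts, and the main (mild) obstacle overall to be the careful bookkeeping of the subleading Mills-ratio corrections when verifying $(R2)$, since one must check that they cancel up to $o(1)$ rather than accumulating.
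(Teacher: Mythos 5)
Your proposal is correct and follows essentially the same route as the paper: expand $\varphi_n$ via the Gaussian tail (Mills-ratio) asymptotic, invert to get $\psi_n(an)\sim\sqrt{2a}\,n$ for $(R1)$ and the finite shift $\psi_n(s_n+c)-\psi_n(s_n)\to c/\sqrt{2\theta}$ for $(R2)$, and use the elementary bound $G_n(-l_n)\le e^{-l_n^2/(2n)}$ for $(L)$ (the paper picks $l_n=n^c$, $c\in(1/2,1)$, instead of $\sqrt{6n\log n}$, but either works). One small inaccuracy in your $(R2)$ step: the discrepancy of the log-corrections $A_n'-A_n = n\log(r_n'^2/r_n^2)+O(1)$ is $O(1)$, not $o(1)$, since $r_n'-r_n$ tends to a nonzero constant so $n\log(r_n'/r_n)\to c/(2\theta)$; this does not affect the conclusion, because $(r_n')^2-r_n^2 = 2nc+O(1)$ and dividing by $r_n'+r_n\sim 2\sqrt{2\theta}\,n$ still gives the limit $c/\sqrt{2\theta}$.
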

\begin{proof}
Let $Z$ denote a standard normal random variable. Then
\begin{equation}
1-G_n(r)=P(Z\geq r/\sqrt{n})=\frac{1}{2\sqrt{\pi}}\int_{r/\sqrt{n}}^{\infty}e^{-{x^2/2}}dx.
\end{equation}	
We use the following trivial bounds
\begin{equation}
e^{-r^2/2}\Big(\frac{1}{r}-\frac{1}{r^3}\Big)\leq \int_{r}^{\infty}e^{-{x^2/2}}dx \leq e^{-r^2/2}\frac{1}{r},\quad \forall r>0. 
\end{equation}
Using the above and the definitions of $\varphi_n$ and $\psi_n$, we get that for any sequence $s_n\sim an$, $a>0$ and $c\in\mathbb{R}$
\begin{equation}\label{inqgauss}
\psi_n(s_n+c)=\sqrt{2ns_n}+\frac{c}{\sqrt{2s_n/n}}-\frac{\log \sqrt{2\pi}}{\sqrt{2s_n/n}}-\frac{\log \sqrt{2s_n}}{\sqrt{2s_n/n}}+o(1).
\end{equation}
Setting $c=0$, Assumption $(R1)$ follows with $f(a)=\sqrt{2a}$. Then by definition $\theta=\sqrt{2\log 2}$. Moreover, 
\begin{equation}
\psi_n(s_n+c)-\psi(s_n)=c/\sqrt{2a}+o(1),
\end{equation}
which yields, for $a=\theta$, Assumption $(R2)$ with $g(c)=c/\sqrt{2\theta}$. Using (\ref{inqgauss}) we have
\begin{equation}
G_n(-l_n)\leq e^{-l_n^2/(2n)}.
\end{equation}
Setting $l_n=n^c$, for some $c\in (1/2,1)$, Assumption $(L)$ is satisfied.
\end{proof}

In the rest of this paper we prove the main results in Section (\ref{sec2}) using two lemmas on spectral properties of the operator $\kappa \Delta_n+\xi_n$, which are proved in Section (\ref{sec3}).

\section{Proof of Theorem \ref{thrmLoc} and Theorem \ref{thrmGrow}}\label{sec2}

We describe the growth of $v_n(t_n,\cdot,x_{k,\tn})$ by using spectral properties of the operator $\kappa \Delta_n +\xi_n$ with zero boundary conditions on certain vertices of extremal potential. To this end we have two main ingredients: firstly, precise descriptions of principle eigenvalue, spectral gap and localization of the principle eigenvector for the aforementioned operators; secondly, a general mechanism allowing us to turn these spectral properties to estimates for $v_n$. For the latter, we follow the general framework established in \cite{GKM07}. 

We introduce the spectral objects we use for our estimates. For $l\in\N$ set $\Gamma_l=\{\xk{1,\tn},\dots,\xk{l,\tn}\}$. For $\xk{i,\tn}\in\Gamma_l$, $i\in\{1,\dots,l\}$, consider the principle eigenvalue and (positive) eigenfunction of the operator $\kappa \Delta_n+\xi_n$ with zero boundary conditions on $\Gamma_l\setminus\{\xk{i,\tn}\}$, denoted by $\lamm{i,l}$ and $\nul{i,l}$, respectively, where $\nul{i,l}$ is normalized so that $\nul{i,l}(\xk{i,\tn})=1$. Let $g_{i,l}$ denote the corresponding spectral gap, that is, the difference between the principle eigenvalue and the second largest eigenvalue. We write $\lambda_i$ and $\nu_{i}$ for $\lambda_{i,i}$ and $\nu_{i,i}$, respectively. Note that as before we do not use $n$ in the notation for eigenvalues and eigenvectors. Finally, for $A\subseteq \Sigma_n$ we define the hitting time
\begin{equation}
\tau_A:=\inf\{t\geq 0:\;X_t\in A\},
\end{equation}
and write simply $\tau_x$ for $\tau_{\{x\}}$.

We have a probabilistic representation for $\nul{i,l}$ given by
\begin{equation}\label{repeigvec}
\nul{i,l}(x)=\E_x\left[\exp\Big(\int_0^{\tau_{\xk{i,\tn}} } \big[\xi_n(X_s)-\lamm{i,l}\big]ds  \Big)\incl{\tau_{\xk{i,\tn}} \leq \tau_{\Gamma_{l}}  }\right].
\end{equation}

The following two lemmas contain the main spectral results. We postpone their proof until the next section.
\begin{lemma}\label{lemmaAna2}
	Let Assumptions $(R1), (R2)$ and $(L)$ be satisfied. Then $P$-a.s. for any $k\in\N$ for all $1\leq i \leq l\leq k$
	\begin{equation}
	\lamm{i,l}=\kix{i,\tn}-\kappa+\Theta(1/n^2)
	\end{equation}
	and 
	\begin{equation}
	\quad  g_{i,l}\geq \kix{i,\tn}-\kix{l+1,\tn}.
	\end{equation}
\end{lemma}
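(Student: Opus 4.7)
The plan is to establish the eigenvalue and gap bounds via variational and probabilistic arguments, using that $H_{i,l}:=\kappa\Delta_n+\xi_n$ with zero boundary on $\Gamma_l\setminus\{x_{i,\tn}\}$ is self-adjoint, so Rayleigh--Ritz and Courant--Fischer apply. The lower bound $\lambda_{i,l}\geq \xi_{i,\tn}-\kappa+\Omega(1/n^2)$ follows from the admissible test function $\psi=\mathbf{1}_{\{x_{i,\tn}\}}+\varepsilon\sum_{z\sim x_{i,\tn}}\mathbf{1}_{\{z\}}$. By Assumption (L) the potential on each neighbor of $x_{i,\tn}$ is at least $-l_n=o(n)$, and with $P$-probability tending to one no neighbor of $x_{i,\tn}$ lies in $\Gamma_l$, so $\psi$ satisfies the boundary condition. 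Since no two neighbors of $x_{i,\tn}$ are themselves adjacent on the hypercube, a direct calculation gives
\[
\frac{\langle \psi, H_{i,l} \psi\rangle}{\|\psi\|^2} = \frac{(\xi_{i,\tn} - \kappa) + 2\varepsilon\kappa + \varepsilon^2 \sum_{z\sim x_{i,\tn}}(\xi_n(z)-\kappa)}{1 + n\varepsilon^2},
\]
and optimizing in $\varepsilon$ at $\varepsilon^\ast\sim \kappa/(\theta n^2)$ (using $\xi_{i,\tn}\sim\theta n$ from Lemma \ref{lemma1}) delivers the claimed lower bound.

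For the matching upper bound I would evaluate $H_{i,l}\nu_{i,l}=\lambda_{i,l}\nu_{i,l}$ at $x_{i,\tn}$ and use $\nu_{i,l}(x_{i,\tn})=1$ to obtain the exact identity
\[
\lambda_{i,l} = \xi_{i,\tn} - \kappa + \frac{\kappa}{n} \sum_{z \sim x_{i,\tn}} \nu_{i,l}(z),
\]
so the task reduces to proving $\nu_{i,l}(z) = O(1/n^2)$ for each neighbor $z$ of $x_{i,\tn}$. From the probabilistic representation \eqref{repeigvec}, the leading contribution is the walk jumping directly from $z$ to $x_{i,\tn}$ at its first step: integrating over the Exp$(\kappa)$ waiting time yields $\tfrac{1}{n}\cdot\kappa/(\kappa+\lambda_{i,l}-\xi_n(z)) = O(1/n^2)$, using $\lambda_{i,l}\geq\xi_{i,\tn}-\kappa\sim\theta n$ and $\xi_n(z)=o(n)$ by Assumption (L). Longer excursions that avoid $\Gamma_l\setminus\{x_{i,\tn}\}$ acquire an additional factor of order $1/n^2$ per step spent off $x_{i,\tn}$ (by the same Laplace transform estimate, since the potential there is $o(n)$) and are summed geometrically, in the spirit of the path analysis in \cite{GKM07}. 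This path estimate is the technical heart of the proof.

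Finally, for the gap bound, Courant--Fischer applied with the one-dimensional subspace $\langle\delta_{x_{i,\tn}}\rangle$ yields
\[
\lambda_2(H_{i,l}) \leq \max\!\Big\{\tfrac{\langle\psi, H\psi\rangle}{\|\psi\|^2}\,:\,\psi|_{\Gamma_l\setminus\{x_{i,\tn}\}}=0,\ \psi(x_{i,\tn})=0\Big\} = \lambda_{l+1,l+1},
\]
since the admissible $\psi$ are precisely the functions vanishing on $\Gamma_l=\Gamma_{l+1}\setminus\{x_{l+1,\tn}\}$. Applying the first assertion of the lemma at level $l+1$ gives $\lambda_{l+1,l+1}=\xi_{l+1,\tn}-\kappa+\Theta(1/n^2)$, and combining with the lower bound on $\lambda_{i,l}$ established above produces $g_{i,l}\geq \xi_{i,\tn}-\xi_{l+1,\tn}-O(1/n^2)$; for $n$ large this error is swallowed by the $\Theta(1)$ gap $\xi_{i,\tn}-\xi_{l+1,\tn}$ guaranteed by Assumption (R2), yielding the stated inequality.
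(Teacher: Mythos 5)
Your lower-bound argument via the Rayleigh quotient with the trial function $\psi=\mathbf 1_{\{x_{i,\tn}\}}+\varepsilon\sum_{z\sim x_{i,\tn}}\mathbf 1_{\{z\}}$ is correct and is a clean alternative to the paper's route (the paper collapses the neighbors of $x_{i,\tn}$ into a single auxiliary state and reads off the principal eigenvalue of a $2\times 2$ matrix; the two computations are morally identical). Your gap argument via Courant--Fischer with the interlacing-type observation $\lambda_2(H_{i,l})\le\lambda_{l+1,l+1}$ is exactly what the paper does.

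The genuine difference, and the genuine gap, is in the upper bound. Evaluating $H_{i,l}\nu_{i,l}=\lambda_{i,l}\nu_{i,l}$ at $x_{i,\tn}$ to get the exact identity $\lambda_{i,l}=\xi_{i,\tn}-\kappa+\tfrac{\kappa}{n}\sum_{z\sim x_{i,\tn}}\nu_{i,l}(z)$ is a nice reduction (and, as a byproduct, gives the trivial lower bound $\lambda_{i,l}\ge\xi_{i,\tn}-\kappa$ since $\nu_{i,l}\ge 0$). But the claim that ``longer excursions acquire an additional factor of order $1/n^2$ per step'' and can be ``summed geometrically'' does not hold at face value. The per-step holding-time factor $\kappa/(\kappa+\lambda_{i,l}-\xi_n(y))$ is $O(1/n)$ only on sites where $\xi_n(y)\ll\theta n$. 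When the path visits a site such as $x_{l+1,\tn},x_{l+2,\tn},\dots$ (which are not excluded by the boundary conditions), that factor is $\kappa/(\xi_{i,l+1}+o(1))=\Theta(1)$, and can even exceed $1$ when $\xi_{i,l+1}<\kappa$. So the crude estimate ``number of length-$m$ paths ($\le n^{m-1}$) times maximal per-path contribution ($\le n^{-m}K^m$)'' gives $K^m/n$ with $K$ possibly $>1$ and does not sum. The real work is to show that excursions reaching these high-potential sites are geometrically suppressed by the $\Omega(n)$ transition steps needed to get there, and that each return excursion still contributes a factor strictly below $1$. This is exactly what the paper's Lemma~\ref{lemmasimpleeigen} does: it decomposes the walk via hitting/exit times of a well-separated high-potential set $A_n^\delta$ (Lemma~\ref{lemma2}(i)), extracts an $\text{Exp}(\kappa/n)$ contribution per return to $A_n^\delta$, and sums the resulting genuinely-contracting geometric series to conclude $\gamma=\xi_{i,\tn}-\kappa+O(1/n^2)$ is in the resolvent set. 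Your outline skips precisely this step, which, as you say, is the technical heart---it should be carried out rather than gestured at.

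Two smaller slips worth fixing: Assumption~$(L)$ only bounds $\xi_n(z)$ from \emph{below} by $-l_n$ for $z\sim x_{i,\tn}$ (after the Borel--Cantelli argument the paper makes around~\eqref{neighbors}); the \emph{upper} bound $\xi_n(z)\le\delta'\theta n$ needed to make $\kappa+\lambda_{i,l}-\xi_n(z)=\Theta(n)$ comes from the separation of extremes (Lemma~\ref{lemma2}), not from~$(L)$. And the gap conclusion $g_{i,l}\ge\xi_{i,\tn}-\xi_{l+1,\tn}-O(1/n^2)$ is all one gets from this argument (the paper's proof of the same step has the same imprecision), so the clean inequality in the lemma statement should be understood with the $\Theta(1/n^2)$ error implicitly absorbed.
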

\begin{lemma}\label{lemmaAna}
	Let Assumptions $(R1), (R2)$ and $(L)$ be satisfied. Then $P$-a.s. for any $k\in\N$ the followings are true:
	\begin{itemize}
		\item[(i)]
$
    \sum_{\;x\not=\xk{i}} \nul{i,l}(x)\longrightarrow 0
$ and $\norm{\nul{i,l}}_2^2\longrightarrow 1$,  for all $1\leq i \leq l\leq k$;
  \vspace{0.1in}
	\item[(ii)]
$
	\nul{i,l}(\xk{k,\tn})=\exp(-c_n (1+o(1)))$, for all  for all $1\leq i \leq l< k$.

\end{itemize}
\end{lemma}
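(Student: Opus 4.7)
Both statements rest on the probabilistic representation~\eqref{repeigvec} combined with the asymptotic $\lambda_{i,l} = \xi_{i,\tn} - \kappa + \Theta(n^{-2})$ from Lemma~\ref{lemmaAna2}. Under this, the Feynman--Kac integrand $\xi_n(X_s) - \lambda_{i,l}$ is $O(1)$ at the finitely many extremal sites but equals $-\theta n(1+o(1))$ at a typical site (Assumption~$(L)$ provides the uniform lower bound on $\xi_n$ needed to rule out pathological excursions through very deep holes), so excursions of the walk away from $x_{i,\tn}$ are heavily exponentially damped.

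For (ii) I would condition on the embedded discrete-time walk $(Y_j)_{j\geq 0}$, whose holding times are i.i.d.\ $\mathrm{Exp}(\kappa)$; a factorwise Laplace transform yields
\begin{equation*}
\nu_{i,l}(x_{k,\tn}) = \sum_{r \geq d}\sum_{\pi} n^{-r}\prod_{j=0}^{r-1}\frac{\kappa}{\kappa+\lambda_{i,l}-\xi_n(y_j)},
\end{equation*}
the inner sum being over nearest-neighbor paths $\pi = (y_0 = x_{k,\tn},y_1,\ldots,y_r = x_{i,\tn})$ with $y_j \notin \Gamma_l$ for $j < r$, where $d := d(x_{k,\tn},x_{i,\tn})$. \emph{Lower bound:} keep only the $d!$ direct paths ($r = d$). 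A Chernoff bound on $\mathrm{Binomial}(n,\tfrac12)$ gives $d = (n/2)(1+o(1))$ $P$-a.s., and at each intermediate vertex of such a path the denominator is $\theta n(1+o(1))$; Stirling then yields $\log\nu_{i,l}(x_{k,\tn}) \geq -(n/2)\log n + O(n) = -c_n(1+o(1))$. \emph{Upper bound:} bound the number of paths of length $r = d + 2m$ from $x_{k,\tn}$ to $x_{i,\tn}$ by $d!\binom{r}{d}(n-1)^{2m}$; the ratio of the length-$r$ contribution to the $r=d$ contribution is then at most $\binom{d+2m}{2m}(\kappa/(\theta n))^{2m}$, which sums to a finite constant (a $\cosh$-type series), so the matching upper bound follows.

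Part (i) I would deduce by iterating the same estimate from a generic starting vertex: $\nu_{i,l}(x) \leq C\,d(x,x_{i,\tn})!/n^{2d(x,x_{i,\tn})}$ for every typical $x$, and the atypical (high-potential) vertices $x_{j,\tn}$ with $j > l$ are covered by the bound of (ii) itself. Summing,
\begin{equation*}
\sum_{x\neq x_{i,\tn}}\nu_{i,l}(x) \;\leq\; C\sum_{d=1}^{n}\binom{n}{d}\frac{d!}{n^{2d}} \;\leq\; C\sum_{d=1}^{n}n^{-d} \;=\; O(n^{-1}),
\end{equation*}
using $\binom{n}{d}d! \leq n^d$; squaring the pointwise bound gives $\sum_{x\neq x_{i,\tn}}\nu_{i,l}(x)^2 \leq C^2\sum_d d!/n^{3d} = O(n^{-3})$, whence $\|\nu_{i,l}\|_2^2 \to 1$.

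The \textbf{main obstacle} is the upper-bound path count in (ii): one must simultaneously control the combinatorial number of walks of each length and the possible ``bonus'' factors coming from paths that visit one of the finitely many high-potential vertices outside $\Gamma_l$, where the denominator $\kappa + \lambda_{i,l} - \xi_n(y_j)$ is only $O(1)$ rather than $\Theta(n)$. Assumption~$(L)$ is indispensable here, as it ensures $P$-a.s.\ that every vertex visited along the dominant paths has potential $\geq -l_n \ll n$, keeping the denominators uniformly of order $n$ throughout the product.
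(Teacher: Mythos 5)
Your overall strategy (Feynman--Kac representation, decomposition over the discrete skeleton, Stirling) is broadly the one the paper uses, and the pointwise bound $\nu_{i,l}(x)\lesssim d!\,n^{-2d}$ for ``typical'' $x$ is the correct order of magnitude. But there are two genuine gaps, both traceable to the ``main obstacle'' you name at the end without actually resolving it.

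\emph{Lower bound in (ii).} You keep the $d!$ direct paths and assert that along each one the denominator $\kappa+\lambda_{i,l}-\xi_n(y_j)$ is $\theta n(1+o(1))$. That is false for many of the $d!$ paths: some pass through vertices with $\xi_n$ far below $-l_n$, where the denominator is enormous, and some pass through other high-potential vertices, where the path must be discarded entirely. Assumption $(L)$ controls the probability $p_n$ that a single site leaves the good band, but with $\sim n/2$ intermediate vertices per path and $d!$ paths, this does not by itself guarantee any single direct path is good. The paper's proof fills this gap with a nontrivial argument: it introduces the random count $H(x)$ of direct paths that stay inside $L_n:=\{-l_n\le\xi_n\le\psi_n(\delta n\log 2)\}$, computes $E[H(x)]=d!(1-p_n)^{d-1}$ via a recursion over the first step, and then uses the deterministic bound $H(x)\le d!$ in a Paley--Zygmund/Markov-type estimate plus Borel--Cantelli to show $H(x_{k,\tn})\ge n^{-1-a}d!(1-p_n)^{d-1}$ $P$-a.s. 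This quantitative surviving-path fraction is indispensable for the lower bound; your sketch omits it.

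\emph{Upper bound in (ii) and part (i).} The cosh-type series argument fails precisely when a longer path visits a vertex in $A_n^\delta$ (high potential but outside $\Gamma_l$), where the per-step factor is $O(1)$ rather than $O(1/n)$; there are $\sim 2^{(1-\delta)n}$ such vertices, so invoking (ii) for ``finitely many atypical sites'' does not cover them. The paper instead exploits the geometric fact (Lemma \ref{lemma2}) that $A_n^\delta$ has minimal pairwise distance $\ge cn$, so any visit to a high-potential site away from $x_{i,\tn}$ costs $\ge cn$ small factors to get there. This is packaged into a self-referential inequality for $\max_{x\notin B_n}\nu_{i,l}(x)$ (split on whether the walk hits $A_n^\delta\setminus\{x_{i,\tn}\}$ before $x_{i,\tn}$), which is then solved to give the uniform bound $\exp(-cn\log n(1+o(1)))$; the contribution of $x\in B_n$ is handled separately by a coupon-collector lower bound on $\tau_{x_{i,\tn}}$ and then summed. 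None of this recursive/geometric machinery appears in your sketch, and without it the ``constant-factor cosh series'' claim is unjustified.
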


\newcommand{\expi}{e^{\int_0^t \xi_n(X_s)ds}}
\newcommand{\expin}{e^{\int_0^{t_n} \xi_n(X_s)ds}}
\newcommand{\atx}[1]{\mathds{1}\{X_t={#1}\}}
\newcommand{\atxn}[1]{\mathds{1}\{X_{t_n}={#1}\}}
\newcommand{\hitone}[1]{\mathds{1}\{\tau_{\Gamma_{#1}}\leq t\}}
\newcommand{\hittwo}[1]{\mathds{1}\{\tau_{\Gamma_{#1}}> t\}}

\newcommand{\vexp}[1]{\exp\big({\int_0^{#1} V(X_s)ds}\big)}
\newcommand{\vexppp}[1]{\exp\big({\int_0^{#1} [V(X_s)-\lambda_z ]ds}\big)}
\newcommand{\atxtime}[2]{\mathds{1}\{X_{#1}={#2}\}}
\newcommand{\vhitone}{\mathds{1}\{\tau_{\Upsilon}\leq t\}}
\newcommand{\vnohitone}{\mathds{1}\{\tau_{\Upsilon}> t\}}
\newcommand{\vnohittwo}{\mathds{1}\{\tau_{\Lambda}> t\}}
\newcommand{\newgam}{{\Lambda}}

Now we describe a general mechanism that allows us to use these spectral properties to get certain estimates. For the following randomness is not relevant and one can take a general connected graph $\Sigma$ with $\Delta$ denoting the generator of the nearest neighbour simple random walk. Consider a potential $V:\Sigma\to \mathbb{R}$ and subsets $\Upsilon,\newgam\subset \Sigma$ such that $\Upsilon\cap \newgam=\emptyset$. Let $\lambda_z$ be the principle eigenvalue of the operator $\kappa \Delta + V$ on $\big(\Sigma\setminus(\Upsilon\cup \newgam)\big)\cup \{z\}$ zero boundary conditions (this is same as setting $V$ to $-\infty$ on $(\Upsilon\cup \newgam)\setminus \{z\}$). For $z\in \Upsilon$, let $\nu_z$ be the corresponding (positive) eigenfunction normalized so that $\nu_z(z)=1$. Then $\nu_z$ has the probabilistic representation
\begin{equation}\label{evecrep}
\nu_z(x)=\E_x[\vexppp{\tau_z}\mathds{1}
\{\tau_{z}=\tau_{\Upsilon}<\tau_{\newgam}\}].
\end{equation}
Define
\begin{equation}\label{defnw}
\omega(t,x,y)=\E_x \left[\vexp{t}\atxtime{t}{y}\vhitone\vnohittwo\right].
\end{equation}

\begin{lemma}\label{lemmaspectbd}
	For any $t>0$
	\begin{equation}\label{eqnlemmaspectbd}
	\omega(t,x,y)\leq \sum_{z\in\Upsilon}\omega(t,z,y)\nu_z(x)\norm{\nu_z}_2^2.
	\end{equation}
\end{lemma}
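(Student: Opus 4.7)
The plan is a first-passage decomposition plus strong Markov, which reduces matters to a time-uniform spectral estimate for $\omega(\cdot,z,y)$ involving the eigenpair $(\lambda_z,\nu_z)$. First I would decompose
\[
\omega(t,x,y)\;=\;\sum_{z\in\Upsilon}\omega_z(t,x,y),\qquad \omega_z(t,x,y):=\E_x\!\left[\vexp{t}\,\atxtime{t}{y}\,\mathds{1}\{X_{\tau_\Upsilon}=z,\,\tau_\Upsilon\leq t,\,\tau_\Lambda>t\}\right],
\]
according to the first entry point $z\in\Upsilon$ of the walk. Applying the strong Markov property at $\tau_z$ and noting that for $z\in\Upsilon$ the constraint $\tau_\Upsilon\leq t-\tau_z$ is automatic when the walk starts from $z$, the post-$\tau_z$ contribution becomes precisely $\omega(t-\tau_z,z,y)$, so
\[
\omega_z(t,x,y)\;=\;\E_x\!\left[\vexp{\tau_z}\,\mathds{1}\{\tau_z=\tau_\Upsilon\leq t,\,\tau_z<\tau_\Lambda\}\,\omega(t-\tau_z,z,y)\right].
\]

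Next I would split $\vexp{\tau_z}=\vexppp{\tau_z}\cdot e^{\lambda_z\tau_z}$, so that the first factor together with the indicator matches the integrand in the probabilistic representation (\ref{evecrep}) of $\nu_z(x)$. Granted the time-uniform spectral inequality
\[
e^{\lambda_z s}\,\omega(t-s,z,y)\;\leq\;\|\nu_z\|_2^2\,\omega(t,z,y)\qquad\text{for all }s\in[0,t],
\]
I can pull the right-hand side (independent of the random variable $\tau_z$) out of the expectation and obtain $\omega_z(t,x,y)\leq \nu_z(x)\,\|\nu_z\|_2^2\,\omega(t,z,y)$. Summing over $z\in\Upsilon$ then gives (\ref{eqnlemmaspectbd}).

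The hard part will be proving the displayed spectral inequality. The idea is to exploit that for $z\in\Upsilon$ the quantity $\omega(r,z,y)$ coincides with the Feynman--Kac kernel of $\kappa\Delta+V$ with zero boundary condition on $\Lambda$ only, and to expand this kernel in the orthonormal eigenbasis of the \emph{more} restrictive operator $H_z=\kappa\Delta+V$ on $(\Sigma\setminus(\Upsilon\cup\Lambda))\cup\{z\}$ (obtained by additionally killing on $\Upsilon\setminus\{z\}$). Since $\nu_z/\|\nu_z\|_2$ is the $L^2$-normalised principal eigenfunction of $H_z$ with eigenvalue $\lambda_z$, the factor $e^{\lambda_z s}$ is exactly what cancels the $e^{-\lambda_z s}$ decay of the leading spectral contribution; the normalisation $\nu_z(z)=1$, which forces $\|\nu_z\|_2\geq 1$, together with the positivity of $\omega(t,z,y)$, is meant to absorb the possibly oscillating sub-dominant eigenmodes into the overall prefactor $\|\nu_z\|_2^2$. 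Controlling those sub-dominant contributions uniformly in $s\in[0,t]$ — rather than merely asymptotically as $t-s\to\infty$ — is the delicate step and is where the spectral gap of $H_z$ (which will be supplied by Lemma \ref{lemmaAna2} in our applications) enters.
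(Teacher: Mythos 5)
Your first-entry decomposition, the application of the strong Markov property at $\tau_z$, the recognition that for $z\in\Upsilon$ the indicator $\mathds{1}\{\tau_\Upsilon\le t-\tau_z\}$ is automatic, and the splitting $\vexp{\tau_z}=\vexppp{\tau_z}\,e^{\lambda_z\tau_z}$ to match the probabilistic representation \eqref{evecrep} of $\nu_z$ are all exactly the right reduction, and indeed coincide with the paper's structure. You have also correctly isolated the remaining step: the time-uniform inequality
$e^{\lambda_z s}\,\omega(t-s,z,y)\le\|\nu_z\|_2^2\,\omega(t,z,y)$ for $s\in[0,t]$, which is precisely the paper's intermediate claim.

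The gap is in how you propose to prove that inequality. You want to ``expand $\omega(r,z,y)$ in the orthonormal eigenbasis of $H_z$,'' but $\omega(r,z,y)$ is the Feynman--Kac kernel with zero boundary conditions on $\Lambda$ \emph{only}; it is not generated by $H_z$ and cannot be expanded in $H_z$'s eigenbasis. More importantly, your sketch then relies on a spectral gap to control sub-dominant modes uniformly in $s$. That cannot be part of the proof: the lemma is stated for arbitrary $\Sigma,V,\Upsilon,\Lambda$ and every $t>0$, so no quantitative gap is available, and invoking Lemma~\ref{lemmaAna2} here would make the general lemma depend on model-specific asymptotics. The correct and much cheaper route is to lower-bound $\omega(t,z,y)$ rather than upper-bound $\omega(t-s,z,y)$: by restricting the first $s$ units of time to paths that avoid $(\Upsilon\cup\Lambda)\setminus\{z\}$ and return to $z$, one gets
\[
\omega(t,z,y)\;\ge\;\E_z\!\left[\vexp{s}\atxtime{s}{z}\mathds{1}\{\tau_{(\Upsilon\cup\Lambda)\setminus\{z\}}>s\}\right]\cdot\omega(t-s,z,y).
\]
The first factor \emph{is} the diagonal heat kernel $h(s,z)$ of $H_z$, and at the diagonal its eigenfunction expansion has all coefficients $(\nu^{(i)}(z))^2\ge 0$, so dropping everything but the principal term gives $h(s,z)\ge e^{\lambda_z s}/\|\nu_z\|_2^2$ with no spectral-gap input whatsoever. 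Rearranging yields exactly the inequality you wanted. Without this sign observation at the diagonal, your proposed expansion has no way to control the non-principal terms for finite $t$, so the argument as written does not close.
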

This lemma is a version of Theorem 4.1 in \cite{GKM07} but since the results in \cite{GKM07} are written for $\mathbb{Z}^d$ for the sake of completeness we give a proof.
\begin{proof}
	We claim that for any $u\in[0,t]$ and $z\in \Upsilon$
	\begin{equation}\label{claim1}
	\E_z\left[\vexp{t-u}\atxtime{t-u}{y}\incl{\tau_{\newgam}>t-u}\right]\leq e^{-\lambda_z u} \norm{\nu_z}_2^2 \;\omega(t,z,y).
	\end{equation}
We have the following lower bound for $w(t,z,y)$
\begin{equation}\label{claimara}
\begin{aligned}
\omega(t,z,y)&=\E_z\left[\vexp{t}\atxtime{t}{y}\vnohittwo\right]\\
&\geq \E_z\left[\vexp{u}\atxtime{u}{z}\incl{\tau_{\newgam}>u}\incl{\tau_{\Upsilon\setminus\{z\} }>u}\right]
\\&\;\;\;\times \E_z\left[\vexp{t-u}\atxtime{t-u}{y}\incl{\tau_{\newgam}>t-u}\right].
\end{aligned}
\end{equation}
In the first equality above we used the fact that $z\in\Upsilon$.
By the spectral decomposition 
\begin{equation}\label{speclow}
\E_z\left[\vexp{u}\atxtime{u}{z}\incl{\tau_{\newgam}>u}\incl{\tau_{\Upsilon\setminus\{z\} }>u}\right]\geq e^{u\lambda_z}\norm{\nu_z}_2^{-2},
\end{equation}
which implies (\ref{claim1}) through (\ref{claimara}). Since we use the type of estimate in (\ref{speclow}) throughout the rest of this section, here we explain it in detail. Let
\begin{equation}
h(t,x):=\E_x\left[\vexp{t}\atxtime{t}{z}\incl{\tau_{\newgam}>t}\incl{\tau_{\Upsilon\setminus\{z\} }>t}\right].
\end{equation}
Then $h$ solves the parabolic equation
\begin{equation}
\frac{\partial}{\partial t} h(t,x)=\kappa\Delta_n h(t,x) + \xi_n(x) h(t,x) ,\quad t\geq 0,\; x\in\big(\Sigma\setminus(\Upsilon\cup \newgam)\big)\cup \{z\}
\end{equation}
with initial condition $h(0,\cdot)=\delta_z(\cdot)$ and boundary conditions
\begin{equation}
h(t,x)=0,\quad \forall t\geq 0, \; x\in \Upsilon\cup \newgam\setminus\{z\}.
\end{equation}
Therefore, $h$ can be given using the spectrum of $\kappa\Delta+\xi$ with zero boundary conditions on $\Upsilon\cup \newgam\setminus\{z\}$. We already defined $\lambda_z$ and $\nu_z$ as the principle eigenvalue and eigenvector, respectively. Let $\lambda^{(i)}$ and $\nu^{(i)}$, $i=2,\dots, 2^n-|\Lambda|-|\Upsilon|+1:=L$, denote the the rest of eigenvalues and the corresponding eigenvectors, respectively, in the spectrum. Here, eigenvectors have the usual $l_2$ normalization: $\norm{\nu^{(i)}}_2^2=1$. Then, with the initial condition $h(0,\cdot)=\delta_z(\cdot)$, we get
\begin{equation}
h(t,x)=e^{\lambda_z t}\frac{\nu_z(x)\nu_z(z)}{\norm{\nu_z}^2}+\sum_{i=2}^{L}e^{\lambda^{(i)}t} \nu^{(i)}(x)\nu^{(i)}(z).
\end{equation}
For $x=z$ all the coefficients in the second sum becomes $(\nu^{(i)}(z))^2$, thus, non-negative. Since we chose $\nu_z(z)=1$, we arrive at that $h(t,x)\geq e^{t\lambda_z}\norm{\nu_z}_2^{-2}$.
Then, (\ref{claim1}) follows. 

Now we continue with the proof of (\ref{eqnlemmaspectbd}). By the definition (\ref{defnw}) we have
\begin{equation}
\begin{aligned}
\omega(t,x,y)=\sum_{z\in\Upsilon}\E_x\left[\right.&
\vexp{\tau_{\Upsilon}}\atxtime{\tau_\Upsilon}{z}\vhitone\vnohittwo
\\&\left.\E_z\left[\vexp{t-\tau_{\Upsilon}}\atxtime{t-\tau_\Upsilon}{y}\incl{\tau_{\newgam}>t-\tau_\Upsilon}\right]
\right]
\end{aligned}
\end{equation}
Since on $\atxtime{\tau_\Upsilon}{z}$ we have $\tau_{\Upsilon}=\tau_z$, we can replace $\incl{X_{\tau_\Upsilon}=z}$ by $\incl{\tau_z=\tau_{\Upsilon}}$, and $\tau_\Upsilon \leq t < \tau_{\newgam}$ implies that $\tau_\Upsilon<\tau_{\newgam}$ and we get a upper bound if we replace $\vhitone\vnohittwo$ by $\incl{\tau_{\Upsilon}<\tau_{\newgam}}$. Hence,
\begin{equation}
\begin{aligned}
w(t,x,y)&\leq \sum_{z\in \Upsilon} \E_x[\vexp{\tau_z}\mathds{1}
\{\tau_{z}=\tau_{\Upsilon}<\tau_{\newgam}\}
\\&\quad\quad\quad\quad\times \E_z[\vexp{t-\tau_{z}}\atxtime{t-\tau_z}{y}\incl{\tau_{\newgam}>t-\tau_z}]]
\\&\leq \sum_{z\in \Upsilon}\E_x[\vexp{\tau_z}\mathds{1}
\{\tau_{z}=\tau_{\Upsilon}<\tau_{\newgam}\}e^{-\lambda_z \tau_z}]\; \norm{\nu_z}_2^2\;\omega(t,z,y)
\\&=\sum_{z\in\Upsilon} \omega(t,z,y)\norm{\nu_z}_2^2\; \nu_z(x).
\end{aligned}
\end{equation}
For the inequality on the second line we used (\ref{claim1}) and for the equality on the third line we used the representation of $\nu_z$ given in (\ref{evecrep}).
\end{proof}

We divide the expectation in the Feynman-Kac representation (\ref{feynmankac}) of $v_n(t,x,x_{k,\tn})$ into two parts: expectation along the paths that visit $\Gamma_{k-1}$ and those that do not. Namely, 
\begin{equation}
v(t,x,\xk{k,\tn})=\omega_k(t,x,x_{k,\tn})+\tilde{\omega}_k(t,x,x_{k,\tn})
\end{equation}
where
\begin{equation}\label{defnwk}
\omega_{k}(t,x,y):=\E_x\left[\expi \atx{y}\hitone{k-1}\right],
\end{equation}
and
\begin{equation}
\tilde{\omega}_k(t,x,y):=\E_x\left[\expi \atx{y}\hittwo{k-1}\right].
\end{equation}
We first prove the following.
\begin{lemma}\label{lemgrwothkk}
	$P$-a.s. for any $k\in\N$ as $n\to\infty$
	\begin{equation}
	\tilde{\omega}_k(t_n,\xk{k,\tn},\xk{k,\tn})\sim e^{\lambda_{k}t_n}
	\end{equation}
\end{lemma}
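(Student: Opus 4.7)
The plan is to recognize $\tilde{\omega}_k(t,\cdot,\cdot)$ as the heat kernel of $\kappa\Delta_n+\xi_n$ with Dirichlet (zero) boundary conditions on $\Gamma_{k-1}$, and read the asymptotics off the spectral decomposition. The indicator $\mathds{1}\{\tau_{\Gamma_{k-1}}>t\}$ inside the Feynman-Kac expectation is exactly the Dirichlet constraint, so $\tilde{\omega}_k(t,\cdot,\cdot)$ coincides with this restricted heat kernel. Let $\mu_1>\mu_2\geq\mu_3\geq\cdots$ denote its eigenvalues and $\phi_1,\phi_2,\dots$ an $\ell^2$-orthonormal basis of eigenvectors on $\Sigma_n\setminus\Gamma_{k-1}$. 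By the definitions in Section \ref{sec2}, $\mu_1=\lambda_k$ and $\phi_1=\nu_k/\norm{\nu_k}_2$; since $\nu_k(x_{k,\tn})=1$, we have $\phi_1(x_{k,\tn})=1/\norm{\nu_k}_2$. Evaluating the spectral identity at $x=y=x_{k,\tn}$ yields
\begin{equation}
\tilde{\omega}_k(t_n,x_{k,\tn},x_{k,\tn})=\sum_i e^{\mu_i t_n}\phi_i(x_{k,\tn})^2.
\end{equation}

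Next I would extract matching lower and upper bounds. Since every term in the above sum is non-negative, keeping only the leading one gives
\begin{equation}
\tilde{\omega}_k(t_n,x_{k,\tn},x_{k,\tn})\geq e^{\lambda_k t_n}\phi_1(x_{k,\tn})^2=\frac{e^{\lambda_k t_n}}{\norm{\nu_k}_2^2}.
\end{equation}
For the upper bound I combine the pointwise inequality $e^{\mu_i t_n}\leq e^{\lambda_k t_n}$ (valid for $t_n\geq 0$ because $\mu_i\leq\mu_1=\lambda_k$) with Parseval's identity $\sum_i\phi_i(x_{k,\tn})^2=\norm{\delta_{x_{k,\tn}}}_2^2=1$, obtaining
\begin{equation}
\tilde{\omega}_k(t_n,x_{k,\tn},x_{k,\tn})\leq e^{\lambda_k t_n}\sum_i\phi_i(x_{k,\tn})^2=e^{\lambda_k t_n}.
\end{equation}

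Putting these together,
\begin{equation}
\frac{1}{\norm{\nu_k}_2^2}\leq\frac{\tilde{\omega}_k(t_n,x_{k,\tn},x_{k,\tn})}{e^{\lambda_k t_n}}\leq 1,
\end{equation}
and Lemma \ref{lemmaAna}(i) tells us that $\norm{\nu_k}_2^2\to 1$ $P$-almost surely, so the ratio is squeezed to $1$ and the claim follows. The attractive feature of this route is that the Parseval trick controls the higher-mode contributions uniformly in $t_n$, without appealing to the spectral gap $g_{k,k}$; in particular the argument works for every time scale $t_n$, short or long. The only step requiring care is the justification of the spectral representation of $\tilde{\omega}_k$ itself (nonnegativity of the potential is not needed, since the operator on a finite set is self-adjoint with respect to counting measure), and I expect no serious obstacle beyond that bookkeeping.
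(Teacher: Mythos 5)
Your proof is correct, and it takes the same spectral-decomposition route as the paper but handles the higher modes more elegantly. The paper's upper bound retains only the first mode exactly and controls the remainder via the spectral gap, yielding
\[
\tilde{\omega}_k(t_n,x_{k,\tn},x_{k,\tn})\leq e^{\lambda_k t_n}\norm{\nu_k}_2^{-2}+e^{\lambda_k t_n}e^{-g_{k,k}t_n},
\]
which needs $t_n\to\infty$ (together with the lower bound $g_{k,k}\geq\xi_{k,\tn}-\xi_{k+1,\tn}$ from Lemma~\ref{lemmaAna2}) to push the second term to zero. Your bound $e^{\mu_i t_n}\leq e^{\lambda_k t_n}$ applied termwise, plus Parseval $\sum_i\phi_i(x_{k,\tn})^2=\norm{\delta_{x_{k,\tn}}}_2^2=1$, gives the sharper estimate $\tilde{\omega}_k\leq e^{\lambda_k t_n}$ outright, valid uniformly for every $t_n\geq0$. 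Combined with the same lower bound $e^{\lambda_k t_n}\norm{\nu_k}_2^{-2}$ and Lemma~\ref{lemmaAna}(i), the squeeze is immediate. So you dispense entirely with the spectral-gap estimate in this particular lemma (it is still used elsewhere, e.g.\ in the time-regime dichotomy of the main proof), and your argument closes the small gap left by the paper's proof in the regime where $t_n$ does not diverge. The one thing worth stating explicitly, which you gesture at, is that $\mu_1=\lambda_k$ is a simple eigenvalue with strictly positive eigenvector by Perron--Frobenius (the restricted graph is connected), which is what justifies writing $\phi_1=\nu_k/\norm{\nu_k}_2$ with $\phi_1(x_{k,\tn})=1/\norm{\nu_k}_2$.
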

\begin{proof}[Proof of Lemma \ref{lemgrwothkk}]
	Using the spectral decomposition of the operator $\kappa \Delta_n +\xi_n$ with zero boundary conditions on $\Gamma_{k-1}$, as discussed before, and part (i) of Lemma \ref{lemmaAna} we get 
	\begin{equation}
	\tilde{\omega}(t_n,x_{k,\tn},x_{k,\tn})\geq e^{\lambda_{k}t_n}\norm{\nu_k}_2^{-2}\sim e^{\lambda_{k}t_n}
	\end{equation}
	We use the spectral gap to get the upper bound
	\begin{equation}
	\tilde{\omega}_k(t_n,x_{k,\tn},x_{k,\tn})\leq e^{\lambda_{k}t_n}\norm{\nu_k}_2^{-2}+e^{\lambda_{k}t_n}e^{-g_{k,k}t_n}\norm{\delta_{x_k}}_2^2.
	\end{equation}
Since $\norm{\delta_{x_{k,\tn}}}_2=1$, using Lemma \ref{lemma1} part (iii), Lemma \ref{lemmaAna2} and Lemma \ref{lemmaAna} part (ii) we are finished with the proof.	
\end{proof}
Note that for $k=1$, $\tilde{\omega}_1(t,x,y)=v_n(t,x,y)$. Hence, the above lemma gives 
\begin{equation}\label{grw1}
v_n(t_n,x_{1,\tn},x_{1,\tn})\sim e^{\lambda_{1}t_n}.
\end{equation}

\newcommand{\cn}{\exp\Big(- c_n (1+o(1))\Big)}
\newcommand{\cnn}{\exp\Big(-2c_n(1+o(1))\Big)}
We need the following result. Recall that we have defined $c_n=\frac{1}{2}n\log n$.
\begin{lemma}\label{lemkiy}
	$P$-a.s. for any $k\in\N$ and for any $i=1,\dots,k-1$ as $n\to\infty$
	\begin{equation}\label{kiy}
 v_n(t_n,x_{i,\tn},x_{k,\tn})\leq e^{\lambda_1 t_n}\cnn +e^{\lambda_i t_n}\cn.
	\end{equation}
\end{lemma}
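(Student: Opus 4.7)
The approach is by strong induction on $k$, the case $k=1$ being vacuous. Suppose the claim holds for all $k'<k$; we prove it for $k$.

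By reversibility and the fact that any path ending at $x_{i,\tn}\in\Gamma_{k-1}$ must visit $\Gamma_{k-1}$, we have $v_n(t_n,x_{i,\tn},x_{k,\tn})=v_n(t_n,x_{k,\tn},x_{i,\tn})=\omega_k(t_n,x_{k,\tn},x_{i,\tn})$. Apply Lemma~\ref{lemmaspectbd} with $\Upsilon=\Gamma_{k-1}$ and $\Lambda=\emptyset$; since for $j\le k-1$ the starting site $x_{j,\tn}$ already lies in $\Gamma_{k-1}$, we have $\omega_k(t_n,x_{j,\tn},x_{i,\tn})=v_n(t_n,x_{j,\tn},x_{i,\tn})$, and Lemma~\ref{lemmaAna} yields $\|\nu_{j,k-1}\|_2^2\sim 1$ together with $\nu_{j,k-1}(x_{k,\tn})=\exp(-c_n(1+o(1)))$. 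Collecting,
\[
v_n(t_n,x_{i,\tn},x_{k,\tn})\le e^{-c_n(1+o(1))}\sum_{j=1}^{k-1}v_n(t_n,x_{j,\tn},x_{i,\tn}).
\]

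We now bound each summand. For the off-diagonal terms $j\neq i$, symmetry gives $v_n(t_n,x_{j,\tn},x_{i,\tn})=v_n(t_n,x_{i\wedge j,\tn},x_{i\vee j,\tn})$ and since $i\vee j\le k-1<k$, the inductive hypothesis applied at level $k'=i\vee j$ yields $v_n(t_n,x_{j,\tn},x_{i,\tn})\le e^{\lambda_1 t_n}e^{-2c_n(1+o(1))}+e^{\lambda_{i\wedge j}t_n}e^{-c_n(1+o(1))}$. For the diagonal $j=i$, decompose $v_n(t_n,x_{i,\tn},x_{i,\tn})=\omega_i(t_n,x_{i,\tn},x_{i,\tn})+\tilde\omega_i(t_n,x_{i,\tn},x_{i,\tn})$. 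Lemma~\ref{lemgrwothkk} provides $\tilde\omega_i(t_n,x_{i,\tn},x_{i,\tn})\sim e^{\lambda_i t_n}$; for $i=1$ one also has $\omega_1\equiv 0$ since $\Gamma_0=\emptyset$, while for $i\ge 2$ a second application of Lemma~\ref{lemmaspectbd} with $\Upsilon=\Gamma_{i-1}$ and $\Lambda=\emptyset$, combined with the inductive hypothesis for the resulting $v_n(t_n,x_{m,\tn},x_{i,\tn})$ (applicable since $i<k$), shows $\omega_i(t_n,x_{i,\tn},x_{i,\tn})=O(e^{\lambda_1 t_n-2c_n(1+o(1))})$.

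Substituting these estimates, distributing the outer factor $e^{-c_n(1+o(1))}$, and using $\lambda_{i\wedge j}\le\lambda_1$ together with $e^{-3c_n}\le e^{-2c_n}$ to absorb lower-order contributions, every surviving term fits into $e^{\lambda_1 t_n}e^{-2c_n(1+o(1))}+e^{\lambda_i t_n}e^{-c_n(1+o(1))}$, closing the inductive step. The main obstacle is the diagonal bound for $v_n(t_n,x_{i,\tn},x_{i,\tn})$: a crude estimate by $e^{\lambda_1 t_n}$ (or by Cauchy--Schwarz through $\sqrt{v_n(t_n,x_{i,\tn},x_{i,\tn})\,v_n(t_n,x_{i,\tn},x_{i,\tn})}$) would destroy the decisive $e^{-2c_n}$ factor demanded in the first term of the conclusion, so the two-layer spectral argument (first at level $k$ for $\omega_k$, then at level $i$ inside $\omega_i$) cannot be avoided, and one must carefully track that the $o(1)$ errors compounded across layers remain absorbable.
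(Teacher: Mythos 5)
Your argument is correct, but it is organized differently from the paper's, and the comparison is instructive.

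The paper decomposes $v_n(t_n,x_{i,\tn},x_{k,\tn})=\omega_i(t_n,x_{i,\tn},x_{k,\tn})+\tilde\omega_i(t_n,x_{i,\tn},x_{k,\tn})$, i.e.\ it splits according to whether the walk visits $\Gamma_{i-1}$. The piece $\omega_i$ is then bounded by the spectral lemma at $\Upsilon=\Gamma_{i-1}$, picking up $\nu_{j,i-1}(x_{k,\tn})=e^{-c_n(1+o(1))}$ and reducing to $v_n(t_n,x_{j,\tn},x_{i,\tn})$ for $j<i$, which the strong induction handles; the piece $\tilde\omega_i$ is bounded by a \emph{second} application of the spectral lemma with the singleton $\Upsilon=\{x_{i,\tn}\}$ and $\Lambda=\Gamma_{i-1}$, giving $\tilde\omega_i(t_n,x_{k,\tn},x_{i,\tn})\le\tilde\omega_i(t_n,x_{i,\tn},x_{i,\tn})\,\nu_i(x_{k,\tn})\|\nu_i\|_2^2$, where $\nu_i(x_{k,\tn})$ supplies the $e^{-c_n}$ factor for the $e^{\lambda_i t_n}$ term and Lemma~\ref{lemgrwothkk} supplies the $e^{\lambda_i t_n}$. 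You instead observe the identity $v_n(t_n,x_{i,\tn},x_{k,\tn})=\omega_k(t_n,x_{k,\tn},x_{i,\tn})$ and apply the spectral lemma at the \emph{larger} set $\Upsilon=\Gamma_{k-1}$, so the single outer factor $\nu_{j,k-1}(x_{k,\tn})=e^{-c_n(1+o(1))}$ is extracted right away and the problem reduces to bounding $v_n(t_n,x_{j,\tn},x_{i,\tn})$ for all $j\le k-1$. The off-diagonal terms are handled by induction; the diagonal term requires a second layer (spectral lemma at $\Gamma_{i-1}$ plus Lemma~\ref{lemgrwothkk}), which is in effect an inline proof of the paper's Lemma~\ref{lemvk} at the lower index $i<k$. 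Both proofs use the same ingredients and the same two-layer structure; yours is ``top-down'' (decompose at $\Gamma_{k-1}$ first) and the paper's is ``bottom-up'' (decompose at $\Gamma_{i-1}$ first, and never pass through the diagonal $v_n(t_n,x_{i,\tn},x_{i,\tn})$ at all, using instead the singleton $\Upsilon=\{x_{i,\tn}\}$ to isolate the dominant contribution). The paper's route has the small advantage of not re-deriving Lemma~\ref{lemvk} inside Lemma~\ref{lemkiy}, while yours has the advantage of treating $i=1$ and $i\ge 2$ uniformly via $\Gamma_{k-1}$ rather than splitting on whether $\Gamma_{i-1}$ is empty. Your final remark about why a crude $e^{\lambda_1 t_n}$ bound on the diagonal would lose the needed $e^{-2c_n}$ is apt: that factor is genuinely required in the long-time localization argument, where it is what beats the $2^{n/2}$ coming from H\"older's inequality.
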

\begin{proof}[Proof of Lemma \ref{lemkiy}]
	We prove (\ref{kiy}) using strong induction. For $k=1$ there is nothing to check. Now assume that (\ref{kiy}) is true for $1,\dots,k-1$.  Let $i\in\{2,\dots,k-1\}$. We use Lemma \ref{lemmaspectbd} with $\Upsilon=\Gamma_{i-1}$ and $\Lambda=\emptyset$. In this case the corresponding $\omega$ defined in (\ref{defnw}) coincides with $\omega_i$ defined as in (\ref{defnwk}). Note that we have $\omega_i(t,x,y)=v_n(t,x,y)$ if $x\in \Gamma_{i-1}$. Hence, using Lemma \ref{lemmaspectbd}
	\begin{equation}
	w_i(t_n,x_{i,\tn},x_{k,\tn})=w_i(t_n,x_{k,\tn},x_{i,\tn})\leq \sum_{j=1}^{i-1}v_n(t_n,x_{j,\tn},x_{i,\tn})\nu_{j,i-1}(x_{k,\tn})\norm{\nu_{j,i-1}}_2^2
	\end{equation}
	By part (ii) of Lemma \ref{lemmaAna}, since $i<k$, we have $\nu_{j,i-1}(x_{k,\tn})=\cn$ for $j=1,\dots,i-1$ and by part (i) of the same Lemma we have $\norm{\nu_{j,i-1}}_2^2\sim 1$. By the strong induction step we have $v(t_n,x_{j,\tn},x_{i,\tn})\leq e^{\lambda_1 t_n} \cn$. Hence,
	\begin{equation}
     w_i(t_n,x_{i,\tn},x_{k,\tn})\leq e^{\lambda_1 t_n} \cnn.
	\end{equation}
	Now we use Lemma \ref{lemmaspectbd} with $\Upsilon=\{x_{i,\tn}\}$ and $\Lambda=\Gamma_{i-1}$. Since $\incl{X_t=x_{i,\tn}}\incl{\tau_{x_{i,\tn}}\leq t}=\incl{X_t=x_{i,\tn}}$, $\omega(t,\cdot,x_{i,\tn})$ defined in (\ref{defnw}) coincides with $\tilde{\omega}_i(t,\cdot,x_{i,\tn})$, and using Lemma \ref{lemmaspectbd} we get
	\begin{equation}\label{kiymama}
	\tilde{\omega}_i(t_n,x,x_{i,\tn})\leq \tilde{\omega}_i(t_n,x_{i,\tn},x_{i,\tn})\nu_i(x)\norm{\nu_i}_2^2.
	\end{equation}
	Hence, by parts (i) and (ii) of Lemma \ref{lemmaAna} and Lemma \ref{lemgrwothkk} we have
	\begin{equation}
	\tilde{\omega}_i(t_n,x_{k,\tn},x_{i,\tn})\leq e^{\lambda_i t_n} \cn.
	\end{equation}
	Since $v_n=\omega_i+\tilde{\omega}_i$, we have proved that (\ref{kiy}) holds true for $i=2,\dots,k-1$. For $i=1$ recall that $v_n=\tilde\omega_1$. Similar to how we arrived at (\ref{kiymama}) we get
	\begin{equation}\label{inqv1}
	v_n(t_n,x,x_{1,\tn})\leq v_n(t_n,x_{1,\tn},x_{1,\tn})\nu_1(x)\norm{\nu_1}_2^2.
	\end{equation}
	Hence, using once again part (ii) of Lemma \ref{lemmaAna} and Lemma \ref{lemgrwothkk} we have 
	\begin{equation}
v_n(t_n,x_{1,\tn},x_{k,\tn})\leq e^{\lambda_1 t_n}\cn
	\end{equation}
	This completes the proof.

\end{proof}

\begin{lemma}\label{lemvk}
	$P$-a.s. for any $k\in \N$ as $n\to\infty$
	\begin{equation}
	v_n(t_n,x_{k,\tn},x_{k,\tn})\leq e^{\lambda_1 t_n}\cnn +e^{\lambda_k t_n}.
	\end{equation}
\end{lemma}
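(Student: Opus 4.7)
The plan is to mimic and extend the argument just carried out for Lemma \ref{lemkiy}, decomposing the Feynman-Kac integral according to whether the walk hits the set $\Gamma_{k-1}$ before time $t_n$, and then estimating each part separately.

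First I would write
\begin{equation}
v_n(t_n,x_{k,\tn},x_{k,\tn})=\omega_k(t_n,x_{k,\tn},x_{k,\tn})+\tilde{\omega}_k(t_n,x_{k,\tn},x_{k,\tn}),
\end{equation}
where the two terms are defined as in the proof of Lemma \ref{lemkiy}. The paths that avoid $\Gamma_{k-1}$ are already handled by Lemma \ref{lemgrwothkk}, which gives $\tilde{\omega}_k(t_n,x_{k,\tn},x_{k,\tn})\sim e^{\lambda_k t_n}$, so this piece directly produces the $e^{\lambda_k t_n}$ summand in the claim.

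For the remaining piece $\omega_k(t_n,x_{k,\tn},x_{k,\tn})$ I would apply Lemma \ref{lemmaspectbd} with $\Upsilon=\Gamma_{k-1}$ and $\Lambda=\emptyset$, exactly as was done at the start of the proof of Lemma \ref{lemkiy}. By symmetry of $v_n$ and the observation that $\omega_k(t_n,x_{j,\tn},x_{k,\tn})=v_n(t_n,x_{j,\tn},x_{k,\tn})$ for $x_{j,\tn}\in\Gamma_{k-1}$, this yields
\begin{equation}
\omega_k(t_n,x_{k,\tn},x_{k,\tn})\leq \sum_{j=1}^{k-1} v_n(t_n,x_{j,\tn},x_{k,\tn})\,\nu_{j,k-1}(x_{k,\tn})\,\|\nu_{j,k-1}\|_2^2.
\end{equation}
Now every factor on the right-hand side has already been controlled: Lemma \ref{lemkiy} provides $v_n(t_n,x_{j,\tn},x_{k,\tn})\leq e^{\lambda_1 t_n}\cnn+e^{\lambda_j t_n}\cn$ for $j=1,\dots,k-1$, part (ii) of Lemma \ref{lemmaAna} gives $\nu_{j,k-1}(x_{k,\tn})=\cn$ (here it is crucial that $j\leq k-1<k$, so the lemma applies), and part (i) of the same lemma gives $\|\nu_{j,k-1}\|_2^2\sim 1$.

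Plugging in and using $\lambda_j\leq \lambda_1$ (which follows from Lemma \ref{lemmaAna2} together with the ordering $\xi_{j,\tn}\leq \xi_{1,\tn}$), every term in the sum is bounded by $e^{\lambda_1 t_n}\exp(-2c_n(1+o(1)))$; since $k$ is fixed, the sum of $k-1$ such terms is again of the same form. Combining this with the estimate for $\tilde{\omega}_k$ gives the desired inequality. The argument is essentially routine once the spectral machinery and the bounds from Lemma \ref{lemkiy} are in hand; the only mild subtlety is making sure the exponents $\lambda_j$ are absorbed into $\lambda_1$, which is immediate from Lemma \ref{lemmaAna2}.
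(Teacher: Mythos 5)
Your proposal is correct and follows the paper's own proof essentially step for step: decompose $v_n=\omega_k+\tilde\omega_k$, apply Lemma \ref{lemmaspectbd} with $\Upsilon=\Gamma_{k-1}$ and $\Lambda=\emptyset$, invoke Lemma \ref{lemkiy} and parts (i) and (ii) of Lemma \ref{lemmaAna} to bound $\omega_k$, and handle $\tilde\omega_k$ with Lemma \ref{lemgrwothkk}. You spell out a couple of steps the paper leaves implicit (carrying the $\|\nu_{j,k-1}\|_2^2\sim1$ factor, absorbing $\lambda_j$ into $\lambda_1$), but the argument is the same.
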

\begin{proof}[Proof of Lemma \ref{lemvk}]
	Using Lemma \ref{lemmaspectbd} with $\Gamma=\Gamma_{k-1}$ and $\newgam=\emptyset$ we get
	\begin{equation}
	\omega_k(t_n,x_{k,\tn},x_{k,\tn})\leq \sum_{i=1}^{k-1} v_n(t_n,x_{i,\tn},x_{k,\tn}) \nu_{i,k-1}(x_{k,\tn}).
	\end{equation}
	Hence, using part (ii) of Lemma \ref{lemmaAna} and Lemma \ref{lemkiy} we have
	\begin{equation}
	\omega_k(t_n,x_{k,\tn},x_{k,\tn})\leq e^{\lambda_1 t_n} \cnn.
	\end{equation}
	Finally, Lemma \ref{lemgrwothkk} finishes the proof.
\end{proof}

\begin{lemma}\label{oldschool}
	$P$-a.s. for any $k\in \N$ as $n\to\infty$
	\begin{equation}
	v_n(t_n,x_{1,\tn},x_{k,\tn})\geq e^{\lambda_1 t_n}\cn.
	\end{equation}
\end{lemma}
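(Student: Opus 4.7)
The plan is to obtain the lower bound by a first-passage-time decomposition of the Feynman--Kac formula combined with the spectral lower bound at the diagonal $(x_{1,\tn},x_{1,\tn})$, in the same spirit as the proof of Lemma \ref{lemgrwothkk}. First I use time-reversibility to write $v_n(t_n,x_{1,\tn},x_{k,\tn})=v_n(t_n,x_{k,\tn},x_{1,\tn})$ and then apply the strong Markov property at $\tau_{x_{1,\tn}}$, restricting the Feynman--Kac path to the event that $x_{1,\tn}$ is hit before any other vertex of $\Gamma_{k-1}$ and before time $t_n$, and to paths that after $\tau_{x_{1,\tn}}$ avoid $\Gamma_{k-1}\setminus\{x_{1,\tn}\}$. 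This produces the lower bound
\begin{equation*}
v_n(t_n,x_{k,\tn},x_{1,\tn})\ \ge\ \mathbb{E}_{x_{k,\tn}}\!\left[e^{\int_0^{\tau_{x_{1,\tn}}}\xi_n(X_s)\,ds}\,\tilde v_n(t_n-\tau_{x_{1,\tn}},x_{1,\tn},x_{1,\tn})\mathds{1}\{\tau_{x_{1,\tn}}=\tau_{\Gamma_{k-1}}\le t_n\}\right],
\end{equation*}
where $\tilde v_n$ denotes the solution of (\ref{PAM}) with zero boundary condition on $\Gamma_{k-1}\setminus\{x_{1,\tn}\}$.

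Next, I apply the spectral decomposition argument of (\ref{speclow}) to $\tilde v_n$: the identity $\tilde v_n(r,x_{1,\tn},x_{1,\tn})\ge e^{\lambda_{1,k-1}r}\norm{\nu_{1,k-1}}_2^{-2}$ follows because every term in the spectral expansion of $\tilde v_n(r,x_{1,\tn},x_{1,\tn})$ is non-negative and the principal one equals $e^{\lambda_{1,k-1}r}\norm{\nu_{1,k-1}}_2^{-2}$. Substituting this bound yields a lower bound of the form $\norm{\nu_{1,k-1}}_2^{-2}e^{\lambda_{1,k-1}t_n}$ times the expectation of $e^{\int_0^{\tau_{x_{1,\tn}}}[\xi_n(X_s)-\lambda_{1,k-1}]\,ds}$ restricted to $\{\tau_{x_{1,\tn}}=\tau_{\Gamma_{k-1}}\le t_n\}$. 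By the probabilistic representation (\ref{repeigvec}) applied with $i=1$, $l=k-1$, the same expectation without the truncation $\tau_{x_{1,\tn}}\le t_n$ equals exactly $\nu_{1,k-1}(x_{k,\tn})$. Using $\lambda_{1,k-1}=\lambda_1+O(1/n^2)$ from Lemma \ref{lemmaAna2} (so that $e^{\lambda_{1,k-1}t_n}\sim e^{\lambda_1 t_n}$ for $t_n=O(c_n)$), together with $\norm{\nu_{1,k-1}}_2\to 1$ from Lemma \ref{lemmaAna}(i) and $\nu_{1,k-1}(x_{k,\tn})=\cn$ from Lemma \ref{lemmaAna}(ii), the claimed bound $v_n(t_n,x_{1,\tn},x_{k,\tn})\ge e^{\lambda_1 t_n}\cn$ follows as soon as the truncation error is shown to be negligible.

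The main obstacle is therefore the tail estimate
$$T_n:=\mathbb{E}_{x_{k,\tn}}\!\left[e^{\int_0^{\tau_{x_{1,\tn}}}[\xi_n(X_s)-\lambda_{1,k-1}]\,ds}\mathds{1}\{\tau_{x_{1,\tn}}=\tau_{\Gamma_{k-1}}>t_n\}\right]=o(\nu_{1,k-1}(x_{k,\tn}))=o(\cn).$$
On the restricted event one has $X_s\notin\Gamma_{k-1}$ for $s<\tau_{x_{1,\tn}}$, hence $\xi_n(X_s)\le\xi_{k,\tn}$, and combining this with $\lambda_{1,k-1}-\xi_{k,\tn}=\xi_{1,k}-\kappa+o(1)$ (from Lemmas \ref{lemma1}(ii) and \ref{lemmaAna2}) yields the crude bound $T_n\le e^{-(\xi_{1,k}-\kappa+o(1))t_n}$. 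For the relevant range of time scales this crude bound is enough provided $t_n$ is not too close to the phase-transition threshold; handling the range near $\alpha=1/\xi_{1,k}$ requires the additional input that under the tilted measure defined by the weight $e^{\int[\xi_n-\lambda_{1,k-1}]ds}$ the hitting time $\tau_{x_{1,\tn}}$ concentrates on values of order $n$ rather than $t_n$ --- essentially because Assumption $(L)$ ensures the existence of many short connecting paths between $x_{k,\tn}$ and $x_{1,\tn}$ through vertices whose potential is not too negative --- so that the tail $T_n$ decays much faster than the crude bound suggests.
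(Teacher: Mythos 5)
Your decomposition has the same skeleton as the paper's proof --- split the Feynman--Kac path at $\tau_{x_{1,\tn}}$, apply the diagonal spectral lower bound at $x_{1,\tn}$, and recognise the leftover expectation as an eigenfunction via (\ref{repeigvec}) --- so the route is essentially the paper's. Two remarks. First, the zero boundary condition on $\Gamma_{k-1}\setminus\{x_{1,\tn}\}$ and the extra restriction $\tau_{x_{1,\tn}}=\tau_{\Gamma_{k-1}}$ are unnecessary: the paper works directly with the unrestricted operator $\kappa\Delta_n+\xi_n$, so the inner term is $v_n(t_n-\tau_{x_{1,\tn}},x_{1,\tn},x_{1,\tn})\geq e^{\lambda_1(t_n-\tau_{x_{1,\tn}})}\norm{\nu_1}_2^{-2}$ and the remaining expectation is exactly $\nu_1(x)$ (with $i=l=1$ in (\ref{repeigvec}), the indicator $\incl{\tau_{x_{1,\tn}}=\tau_{\Gamma_1}}$ is vacuous). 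Using $(\lambda_{1,k-1},\nu_{1,k-1})$ is also correct but makes you pay a price $e^{-(\lambda_1-\lambda_{1,k-1})t_n}$ that Lemma \ref{lemmaAna2} only controls to $e^{O(t_n/n^2)}$; for $t_n\gg n^3\log n$ this could swamp $\exp(c_n)$, and the long-time regime puts no upper bound on $t_n$, so you would need a separate argument that $\lambda_1-\lambda_{1,k-1}\ll 1/n^2$.

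The substantive issue is the truncation, which you are right to flag (the paper's proof silently equates the truncated expectation with the full $\nu_1(x)$), but your crude tail estimate does not close the gap. On the restricted event $\xi_n(X_s)-\lambda_{1,k-1}\leq \xi_{k,\tn}-\lambda_{1,k-1}=-(\xi_{1,k}-\kappa)+o(1)$, and this is negative only when $\xi_{1,k}>\kappa$; nothing in the assumptions forces an order-one random constant $\xi_{1,k}$ to exceed the fixed diffusion parameter $\kappa$. Even granting $\xi_{1,k}>\kappa$, comparing $e^{-(\xi_{1,k}-\kappa)t_n}$ with $\exp(-c_n)$ requires $\alpha>1/(\xi_{1,k}-\kappa)>1/\xi_{1,k}$, so a window of long time scales remains uncovered. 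The robust fix is to not estimate the tail $T_n$ at all: the lower bound for $\nu_1(x_{k,\tn})$ in Lemma \ref{lemmaAna}(ii) is built from direct paths of length $d\sim n/2$ through $L_n$, and under the tilted measure $\kappa e^{-\kappa s}e^{(\xi_n(y_j)-\lambda_1)s}ds$ the holding time at each site on the path is exponential with rate $\kappa+\lambda_1-\xi_n(y_j)\sim\theta n$, so the total traversal time is $O(1)$ with overwhelming probability and in particular is $\ll t_n$ for any $t_n\to\infty$. The truncated expectation therefore already retains the $\exp(-c_n(1+o(1)))$ lower bound, and this should be stated rather than bounded away by a tail estimate.
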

\begin{proof}[Proof of Lemma \ref{oldschool}]
	Note that the description in (\ref{repeigvec}) gives
	\begin{equation}
	\nu_1(x)=\E_x\left[\exp\Big(\int_0^{\tau_{\xk{1,\tn}} } \big[\xi_n(X_s)-\lamm{1}\big]ds  \Big)\right].
	\end{equation}
	Hence, using the Feynman-Kac formula, the spectral decomposition of $\kappa \Delta_n+\xi_n$ we get
	\begin{equation}
	\begin{aligned}
	v_n(t_n,x,x_{1,\tn})&=\E_x\left[\expin \atxn{x_{1,\tn}}\right]
	\\&=\E_x\left[e^{\int_0^{\tau_{x_{1,\tn}}}\xi_n(X_s)ds } E_{x_{1,\tn}}[e^{\int_0^{t_n-\tau_{x_{1,\tn}}}\xi_n(X_s)ds} \incl{X_{t_n-\tau_{x_{1,\tn}}}=x_{1,\tn}}]\right]
	\\&\geq \E_x\left[e^{\int_0^{\tau_{x_{1,\tn}}}\xi_n(X_s)ds } e^{\lambda_1 (t_n-\tau_{x_{1,\tn}})}\norm{\nu_1}_2^{-2}\right]
	\\&=e^{t_n\lambda_1}\norm{\nu_1}_2^{-2} \E_x\left[e^{\int_0^{\tau_{x_{1,\tn}}}\big(\xi_n(X_s)-\lambda_1\big)ds}\right]=e^{t_n\lambda_1} \norm{\nu_1}_2^{-2}\nu_1(x).
	\end{aligned}
	\end{equation}
	Then, part (i) and (ii) of Lemma \ref{lemmaAna} finish the proof.
\end{proof}
Note that, Lemma \ref{lemkiy} and Lemma \ref{oldschool} yield that $P$-a.s. for any $k\in\N\setminus\{1\}$ as $n\to\infty$
\begin{equation}\label{growth1k}
v_n(t_n,x_{1,\tn},x_{k,\tn})=e^{\lambda_1 t_n}\cn
\end{equation}
Now we are ready to prove the main results.
\begin{proof}[Proof of Theorem \ref{thrmLoc} and Theorem \ref{thrmGrow}]
	We first prove the statements for $k=1$, namely, (\ref{thrmLocx1}) and (\ref{thrmGrowx1}). Recalling (\ref{inqv1}) and that $v_n(t,x_{1,\tn})=\sum_{x\in\Sigma_n}v(t,x,x_{1,\tn})$ we have
	\begin{equation}
	\sum_{x\not= x_{1,\tn}} v_n(t,x,x_{1,\tn})\leq v_n(t,x_{1,\tn},x_{1,\tn})\sum_{x\not= x_{1,\tn}}\nu_1(x)
	\end{equation}
	Hence, using part (i) of Lemma \ref{lemmaAna} as $n\to\infty$ we have $v_n(t_n,x_{1,\tn})\sim v_n(t_n,x_{1,\tn},x_{1,\tn})$, that is, (\ref{thrmLocx1}). Finally, (\ref{grw1}) finishes the proof of (\ref{thrmGrowx1}).
	Now we assume $k\in\N\setminus{\{1\}}$. Using Lemma \ref{lemmaspectbd} with $\Upsilon=\Gamma_{k-1}$ and $\Lambda=\emptyset$ and Lemma \ref{lemkiy} we have as $n\to\infty$
	\begin{equation}\label{son1}
	\begin{aligned}
	\omega_k(t_n,x,x_{k,\tn})&\leq v_n(t_n,x_{1,\tn},x_{k,\tn}) \nu_{1,k-1}(x)+\sum_{i=2}^{k-1} v_n(t_n,x_{i,\tn},x_{k,\tn})\nu_{i,k-1}(x)
	\\&\leq v_n(t_n,x_{1,\tn},x_{k,\tn}) \nu_{1,k-1}(x)+\\&\left[e^{\lambda_1 t_n}\cnn + e^{\lambda_2 t_n}\cn\right]\sum_{i=2}^{k-1} \nu_{i,k-1}(x).
	\end{aligned}
	\end{equation}
    The same reasoning we used to get (\ref{kiymama}) yields
    \begin{equation}
    \tilde{\omega}_k(t_n,x,x_{k,\tn})\leq \tilde{\omega}_k(t_n,x_{k,\tn},x_{k,\tn}) \nu_k(x)\norm{\nu_k}_2^2.
    \end{equation}
	
	We separate the short and long time regimes.
	
\underline{Short time regime}:
The key point is that in this time regime, by Lemma \ref{lemmaAna2},
\begin{equation}\label{timeregime}
e^{\lambda_1 t_n}\cn\ll e^{\lambda_k t_n}. 
\end{equation}
Recall that $v_n=\omega_k+\tilde{\omega}_k$. Due to (\ref{timeregime}), Lemma \ref{lemgrwothkk} and Lemma \ref{lemvk} yield
\begin{equation}
v_n(t_n,x_{k,\tn},x_{k,\tn})\sim e^{\lambda_k t_n}.
\end{equation}
For the second item on the right hand side of the last inequality in (\ref{son1}), (\ref{timeregime}) gives
\begin{equation}
\left[e^{\lambda_1 t_n}\cnn + e^{\lambda_2 t_n}\cn\right]\ll e^{\lambda_k t_n}.
\end{equation}
By part (i) of Lemma \ref{lemmaAna} we have $\sum_x\sum_{i=2}^{k-1} \nu_{i,k-1}(x)\leq k(1+o(1))$. Via Lemma \ref{lemkiy} and (\ref{timeregime}) we get $v_n(t_n,x_{1,\tn},x_{k,\tn})\ll e^{\lambda_k t_n}$. Hence, applying again part (i) and (ii) of Lemma \ref{lemmaAna} we have
\begin{equation}
\sum_{x}\omega_k(t_n,x,x_k)\ll v_n(t_n,x_k,x_k)
\end{equation}
 Hence, by part (i) of Lemma \ref{lemmaAna}
\begin{equation}
{\sum_{x\not= x_{k,\tn}} \tilde{\omega}_k(t_n,x,x_{k,\tn})}\leq\tilde{\omega}_k(t_n,x_{k,\tn},x_{k,\tn}) \sum_{x\not= x_{k,\tn}}\nu_k(x)\norm{\nu_k}_2^2\ll \tilde{\omega}_k(t_n,x_{k,\tn},x_{k,\tn})
\end{equation}
and we get $v_n(t_n,x_{k,\tn})\sim \tilde{\omega}_k(t_n,x_{k,\tn},x_{k,\tn})$. Since $\tilde{\omega}_k(t,x_k,x_k)\leq v_n(t,x_k,x_k)$ we reach at
\begin{equation}
u(t_n,x_{k,\tn},x_{k,\tn})=\frac{v_n(t_n,x_{k,\tn},x_{k,\tn})}{v_n(t_n,x_{k,\tn})}\longrightarrow 1.
\end{equation}
This finishes the proof of the statement in Theorem \ref{thrmLoc} concerning short time scales. By Lemma \ref{lemgrwothkk} we have $\tilde{\omega}_k(t_n,x_{k,\tn},x_{k,\tn})\sim e^{\lambda_k t_n}$ and by Lemma \ref{lemmaAna2} $\lambda_k=\xi_{k}-\kappa +\Theta(1/n^2)$. Since on short time scales $t_n\ll n^2$ we have $e^{\lambda_{k,\tn}\tn}\sim e^{(\xi_{k,\tn}-\kappa)t_n}$. Hence, we are finished with the proof Theorem \ref{thrmGrow} for short time scales.

\underline{Long time regime}: In this time regime, by Lemma \ref{lemmaAna2},
\begin{equation}
e^{\lambda_1 t_n}\cn\gg e^{\lambda_k t_n}.
\end{equation}
By Lemma \ref{lemmaAna2} and part (ii) of Lemma \ref{lemma1} there exist random positive constants $C$ and $C'$ such that $\lambda_1-\lambda_2>C$ and $\lambda_1-\lambda_k>C'$. By the latter and Lemma \ref{lemmaAna2}, in this regime we have $t_n> C'' n\log n$ for some random positive constant $C''$. Therefore, for some $\e_n\to 0$,
\begin{equation}
\frac{e^{\lambda_2 t_n} \cn}{e^{\lambda_1 t_n}\cn}\leq e^{-C t_n}e^{\e_n n \log n}\leq e^{-(C''n-\e_n)n\log n}.
\end{equation}
By part (i) of Lemma \ref{lemmaAna} and H\"older's inequality
\begin{equation}
\sum_{x\in\Sigma_n}\sum_{i=2}^{k-1} \nu_{i,k-1}(x)\leq 2k 2^{n/2}. 
\end{equation}
Then, since $n\ll n\log n$, using (\ref{growth1k}) we get
\begin{equation}
\sum_{x}\left(e^{\lambda_1 t_n}\cnn + e^{\lambda_2 t_n}\cn\right)\sum_{i=2}^{k-1} \nu_{i,k-1}(x)\ll v_n(t_n,x_{1,\tn},x_{k,\tn}).
\end{equation}
Using part (i) of Lemma \ref{lemmaAna} we have
\begin{equation}
\sum_{x\not= x_{1,\tn}}v_n(t_n,x_{1,\tn},x_{k,\tn})\nu_{1,k-1}(x)\ll v_n(t_n,x_{1,\tn},x_{k,\tn}),
\end{equation}
and conclude through (\ref{son1}) that $\sum_{x}\omega_k(t_n,x,x_{k,\tn})\sim v_n(t_n,x_{1,\tn},x_{k,\tn})$. Once more using part (i) of Lemma \ref{lemmaAna}, Lemma \ref{lemgrwothkk} and Lemma \ref{oldschool} we have
\begin{equation}
   \sum_x \tilde{\omega}_k(t_n,x,x_{k,\tn})\leq \tilde{\omega}_k(t_n,x_{k,\tn},x_{k,\tn})\sum_{x} \nu_k(x)\sim \tilde{\omega}_k(t_n,x_{k,\tn},x_{k,\tn})\sim e^{\lambda_k t_n}\ll v_n(t_n,x_{1,\tn},x_{k,\tn}).
\end{equation}
Hence, $v_n(t_n,x_{k,\tn})\sim v_n(t_n,x_{1,\tn},x_{k,\tn})$, and (\ref{growth1k}) finishes the proof for the long time scales. 
\end{proof}

\section{Proof of spectral results}\label{sec3}

In this section we prove  of Lemma \ref{lemmaAna2} and Lemma \ref{lemmaAna}. For proving the results about eigenvalues we first give a general result for a given potential on $\Sigma_n$ which is similar in spirit to the cluster expansion result given in \cite{GM98} (Lemma 2.18, on page 45). Let $V:\Sigma_n\to[-\infty,\infty)$ be a potential and $A\subset \Sigma_n$ be such that
\begin{equation}
d_{\min}(A)=\min\{ d(x,y):x,y\in A,x\not= y\}>2.
\end{equation}
We set
\begin{equation}
N:=\max_{A} V,\quad   M:=\max_{\Sigma_n\setminus A} V.
\end{equation}
\begin{lemma}\label{lemmasimpleeigen}
If
\begin{equation}
M\leq N-\kappa,
\end{equation}
then 
\begin{equation}\label{inqeigv}
N-\kappa\leq\lambda_1 <\gamma
\end{equation}
for any $\gamma>N-\kappa$ with
\begin{equation}\label{inqnew}
\frac{\kappa}{\gamma-(N-\kappa)}<\frac{n(\gamma-M)}{\kappa}.
\end{equation}

\end{lemma}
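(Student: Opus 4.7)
The plan is to handle the two bounds separately. For the lower bound $\lambda_1 \geq N - \kappa$, I apply the Rayleigh characterization $\lambda_1 = \sup_{f \not\equiv 0}\langle f,(\kappa\Delta_n + V)f\rangle/\|f\|_2^2$ with the test function $f = \delta_{x_0}$, where $x_0 \in A$ satisfies $V(x_0) = N$. Since $x_0$ has exactly $n$ neighbours, $\kappa\Delta_n \delta_{x_0}(x_0) = -\kappa$, hence $\langle \delta_{x_0},(\kappa\Delta_n+V)\delta_{x_0}\rangle = N - \kappa$ and $\|\delta_{x_0}\|_2^2 = 1$, giving $\lambda_1 \geq N - \kappa$ immediately.

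For the upper bound $\lambda_1 < \gamma$ I will show that $\gamma I - \kappa\Delta_n - V$ is strictly positive definite on $\ell^2(\Sigma_n)$. Using the standard identity $\langle f, -\kappa\Delta_n f\rangle = \frac{\kappa}{n}\sum_{\{x,y\}\colon x\sim y}(f(x)-f(y))^2$ (sum over unordered edges of $\Sigma_n$),
\[
\langle f,(\gamma I - \kappa\Delta_n - V)f\rangle = \sum_{x}(\gamma - V(x))f(x)^2 + \frac{\kappa}{n}\sum_{\{x,y\}\colon x\sim y}(f(x)-f(y))^2.
\]
Splitting by $A$ and $\Sigma_n\setminus A$, the potential part is bounded below by $(\gamma - N)\sum_{x\in A}f(x)^2 + (\gamma - M)\sum_{x\notin A}f(x)^2$; the coefficient $\gamma - N$ may be negative (with absolute value at most $\kappa$, since $\gamma > N - \kappa$), so positivity on $A$ has to be borrowed from the gradient term.

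The geometric assumption $d_{\min}(A) > 2$ (so $d_{\min}(A)\geq 3$) provides exactly what is needed: every neighbour of a vertex in $A$ lies in $\Sigma_n\setminus A$, and each vertex of $\Sigma_n\setminus A$ is adjacent to at most one element of $A$ (otherwise two points of $A$ would be at Hamming distance $\leq 2$). I will apply the Young-type inequality $(f(x)-f(y))^2 \geq (1-a)f(x)^2 + (1 - 1/a)f(y)^2$ for a parameter $a\in(0,1)$ only on the edges between $A$ and $\Sigma_n\setminus A$. Summing over $y\sim x$ and then over $x\in A$, and using the two geometric facts together with $1 - 1/a < 0$ to bound $\sum_{x\in A}\sum_{y\sim x}f(y)^2 \leq \sum_{x\notin A}f(x)^2$, I obtain
\[
\frac{\kappa}{n}\sum_{\{x,y\}\colon x\sim y}(f(x)-f(y))^2 \geq \kappa(1-a)\sum_{x\in A}f(x)^2 + \frac{\kappa(1 - 1/a)}{n}\sum_{x\notin A}f(x)^2.
\]

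Combining the two lower bounds, the quadratic form is at least
\[
\bigl[(\gamma - (N - \kappa)) - \kappa a\bigr]\sum_{x \in A}f(x)^2 + \Bigl[(\gamma - M) - \tfrac{\kappa(1/a - 1)}{n}\Bigr]\sum_{x\notin A}f(x)^2.
\]
The first bracket is strictly positive iff $a < (\gamma - (N-\kappa))/\kappa$, the second iff $a > \kappa/(\kappa + n(\gamma - M))$; such an $a$ exists precisely when $\kappa^2 < (\gamma - (N-\kappa))(\kappa + n(\gamma - M))$, which is strictly weaker than the assumed inequality (\ref{inqnew}) because $\kappa + n(\gamma - M) > n(\gamma - M)$. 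Hence (\ref{inqnew}) guarantees such an $a$, the quadratic form is strictly positive on every $f\not\equiv 0$, and $\lambda_1 < \gamma$ follows. The main delicacy is precisely this balancing: in the intended applications $\gamma - (N-\kappa)$ is only of order $1/n^2$, so one must exploit both the full set of $n$ neighbours of each $x\in A$ \emph{and} the non-overlap property (that the $A$-to-$A^c$ edge stars at distinct points of $A$ are disjoint) in order not to lose an extra factor of $n$ when transferring mass from $\Sigma_n\setminus A$ back to $A$ via Young's inequality.
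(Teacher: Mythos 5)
Your proof is correct, and it takes a genuinely different route from the paper. The paper establishes the upper bound probabilistically: it writes the resolvent as $\mathcal{R}_\gamma\mathbf{1}(x)=\E_x\!\left[\int_0^\infty e^{\int_0^t(V(X_s)-\gamma)\,ds}\,dt\right]$, decomposes the time axis into successive excursions between $A$ and $\Sigma_n\setminus A$ using the hitting times $\sigma_0<\tau_0<\sigma_1<\cdots$, and uses $d_{\min}(A)>2$ to argue that $\tau_i-\sigma_i$ is exponential of rate $\kappa$ while the return time $\sigma_{i+1}-\tau_i$ is stochastically bounded below by an exponential of rate $\kappa/n$; the condition (\ref{inqnew}) then guarantees that the per-excursion multiplicative factor $\frac{\kappa}{\kappa+\gamma-N}\cdot\frac{\kappa}{n(\gamma-M)}$ is below $1$, so the resolvent series converges. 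You instead work with the Dirichlet form $\langle f,(\gamma I-\kappa\Delta_n-V)f\rangle$ and show it is strictly positive; the parameter $a$ in your Young-type inequality plays exactly the role that the excursion ratio plays in the paper, and the two geometric consequences of $d_{\min}(A)>2$ that you isolate (each $x\in A$ has all $n$ neighbours in $A^c$; each $y\in A^c$ has at most one neighbour in $A$) correspond precisely to the rate-$\kappa$ exit and rate-$\kappa/n$ return in the excursion picture. Your lower bound via the test function $\delta_{x_0}$ is also a variational restatement of the paper's monotonicity argument. The variational route is shorter and more self-contained, does not require setting up the resolvent representation, and as you note even yields the slightly weaker sufficient condition $\kappa^2<(\gamma-(N-\kappa))(\kappa+n(\gamma-M))$; what it does not give you, and what the paper's method does, is the probabilistic machinery (excursion estimates, hitting-time representations of $\nu_z$) that is reused elsewhere in Section 3. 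One small point worth making explicit in your write-up: since $V$ is allowed to take the value $-\infty$ (it is used in the paper with $V\equiv-\infty$ on $\Gamma_l\setminus\{x_{i,2^n}\}$), the quadratic form should be understood on the subspace of $f$ vanishing on $V^{-1}(-\infty)$, and your test function $\delta_{x_0}$ lies in that subspace because $V(x_0)=N$ is finite; also observe that the admissible range for $a$ is the nonempty intersection of $(0,1)$ with $\big(\tfrac{\kappa}{\kappa+n(\gamma-M)},\ \tfrac{\gamma-(N-\kappa)}{\kappa}\big)$, which is automatic once (\ref{inqnew}) holds.
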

\begin{proof}[Proof of Lemma \ref{lemmasimpleeigen}]
The lower bound in (\ref{inqeigv}) follows easily by replacing $V$ with $-\infty$ everywhere expect at the site in $A$ where the maximum value $N$ is reached and using the fact that $\lambda_1$ is non-decreasing in $V$.

For the upper bound we will show that any $\gamma>N-k$ that satisfies (\ref{inqnew}) is in the resolvent set of $\kappa\Delta_n+V$. This is enough because if $\gamma>0$ satisfies (\ref{inqnew}) then so does any $\gamma'>\gamma$. We denote by $\mathcal{R}_\gamma$ the resolvent at $\gamma$. Using the probabilistic representation of the resolvent, since we are on a finite space, it is enough to check that

\begin{equation}\label{resolve}
\mathcal{R}_\gamma\mathbf{1}(x)=\E_x[\int_0^\infty dt\;  e^{\int_0^t (V(X_s)-\gamma)ds}]<\infty
\end{equation}
for any $x\in \Sigma_n$ where $\mathbf{1}$ denotes the constant function of 1. We define hitting times $0\leq \sigma_0<\tau_0<\sigma_1<\tau_1<\cdots$ by
\begin{equation}
\sigma_0 =\inf\{t\geq 0:\; X(t)\in A\},
\end{equation}
and for $i\in\N\cup\{0\}$
\begin{equation}
\begin{aligned}
\tau_i&=\inf\{t\geq \sigma_i:\; X(t)\notin A\},
\\
\sigma_{i+1}&=\inf\{t\geq \tau_i:\; X(t)\in A\}.
\end{aligned}
\end{equation}
Note that, since $d_{\min}(A)>2$, $\tau_i-\sigma_i$, $i=0,1,\dots$, is an independent sequence of exponential random variables with rate $\kappa$. Using these stopping times we can write
\begin{equation}
\mathcal{R}_\gamma\mathbf{1}(x)=\E_x[\int_0^{\sigma_0} dt\;  e^{\int_0^t (V(X_s)-\gamma)ds}]+\sum_{i=0}^\infty \E_x[\int_{\sigma_i}^{\sigma_{i+1}} dt\;  e^{\int_0^t (V(X_s)-\gamma)ds}].
\end{equation}
Note that $V(x)\leq M \leq N-\kappa<\gamma$ for $x\notin A$. Since $X(t)\notin A$ for $t\in[0,\sigma_0)$
\begin{equation}
\E_x[\int_0^{\sigma_0} dt\;  e^{\int_0^t (V(X_s)-\gamma)ds}]\leq \int_0^\infty dt\;  e^{\int_0^t (M-\gamma)ds}=\int_0^\infty dt\;  e^{-t(\gamma-M)}=\frac{1}{\gamma-M}<\infty.
\end{equation}
For $i=0,1,\dots$ we have
\begin{equation}
\begin{aligned}
\E_x[\int_{\sigma_i}^{\sigma_{i+1}} dt\;  e^{\int_0^t (V(X_s)-\gamma)ds}]&=\E_x\Big[e^{\int_0^{\sigma_0} (V(X_s)-\gamma)ds }\\&\hspace{1cm}\times  \E_{X(\sigma_0)}[e^{\int_0^{\sigma_i}(V(X_s)-\gamma)ds}  ]
\\&\hspace{1cm}\times 
\E_{X(\sigma_i)}[\int_{0}^{\sigma_{1}} dt\;  e^{\int_0^t (V(X_s)-\gamma)ds}]\Big].
\end{aligned}
\end{equation}
Since $V(X_s)<\gamma$ for $s\in[0,\sigma_0)$ we have $e^{\int_0^{\sigma_0} (V(X_s)-\gamma)ds}<1$. By the strong Markov property
\begin{equation}
\E_{X(\sigma_0)}[e^{\int_0^{\sigma_i}(V(X_s)-\gamma)ds}  ]\leq \left(\max_{x\in A} \E_x[e^{\int_0^{\sigma_1} (V(X_s)-\gamma)ds }]\right)^i.
\end{equation}
Hence,
\begin{equation}
\begin{aligned}
\E_x[\int_{\sigma_i}^{\sigma_{i+1}} dt\;  e^{\int_0^t (V(X_s)-\gamma)ds}]&\leq \left(\max_{x\in A} \E_x[e^{\int_0^{\sigma_1} (V(X_s)-\gamma)ds }]\right)^i
\\&\;\;\; \times \max_{x\in A}\E_x\left[\int_0^{\sigma_1}dt\;  e^{\int_0^t (V(X_s)-\gamma)ds}\right].
\end{aligned}
\end{equation}
Therefore, to finish the proof of (\ref{resolve}) it is enough to check that
\begin{equation}\label{check1}
\max_{x\in A} \E_x[e^{\int_0^{\sigma_1} (V(X_s)-\gamma)ds }]<1
\end{equation}
and
\begin{equation}\label{check2}
\max_{x\in A}\E_x[\int_0^{\sigma_1}dt\;  e^{\int_0^t (V(X_s)-\gamma)ds}]<\infty.
\end{equation}
For the former we write
\begin{equation}
\E_x[e^{\int_0^{\sigma_1} (V(X_s)-\gamma)ds }]=\E_x\Big[e^{\int_0^{\tau_0} (V(X_s)-\gamma)ds }\E_{X(\tau_0)} [e^{\int_0^{\sigma_0}    (V(X_s)-\gamma)ds}]\Big].
\end{equation}
Since $d_{\min}(A)>2$ any $z\notin A$ has at most one neighbour that is in $A$. Hence, $\sigma_0$, for the walk starting from any $z\notin A$, is stochastically bounded from below by an exponential random variable with rate $\kappa/n$. Hence, using once again that $V\leq M<\gamma$ on $A^c$, we can conclude that for any $z\notin A$ 
\begin{equation}
\E_z[e^{\int_0^{\sigma_0}    (V(X_s)-\gamma)ds}]< \E_z[e^{-(\gamma-M)\sigma_{0}}]\leq  \frac{\kappa/n}{\kappa/n+\gamma-M}<\frac{\kappa}{n(\gamma-M)}.
\end{equation}
Also, recall that starting from $x\in A$, $\tau_0$ is distributed as an exponential random variable with rate $\kappa$. Hence, 
\begin{equation}\label{estimatesomeeigen}
\begin{aligned}
\max_{x\in A}\E_x[e^{\int_0^{\sigma_1} (V(X_s)-\gamma)ds }]&=\max_{x\in A}\E_x\Big[e^{\int_0^{\tau_0} (V(X_s)-\gamma)ds }\E_{X(\tau_0)} [e^{\int_0^{\sigma_0}    (V(X_s)-\gamma)ds}\Big]
\\&\leq \max_{x\in A}\E_x\Big[ e^{\int_0^{\tau_0} (V(X_s)-\gamma)ds }\Big]\frac{\kappa}{n(\gamma-M)}
\\&=\max_{x\in A}\frac{\kappa}{\kappa+\gamma-V(x)}\frac{\kappa}{n(\gamma-M)}
\\&= \frac{\kappa}{\kappa+\gamma-N}\frac{\kappa}{n(\gamma-M)}.
\end{aligned}
\end{equation}
By (\ref{inqnew}) the last quantity above is  less than 1 and thus, (\ref{check1}) is satisfied. Now it remains to check (\ref{check2}). To this end we write
\begin{equation}\label{check3}
\E_x[\int_0^{\sigma_1}dt\;  e^{\int_0^t (V(X_s)-\gamma)ds}]=\E_x[\big(\int_0^{\tau_0} +\int_{\tau_0}^{\sigma_0}\big)dt\;  e^{\int_0^t (V(X_s)-\gamma)ds}].
\end{equation}
 For any $x\in A$ for a $c>0$ appropriately chosen $V(x)-\gamma\leq N-\gamma<c<\kappa$. Hence, 
\begin{equation}
\E_x[\int_0^{\tau_0}dt\;  e^{\int_0^t (V(X_s)-\gamma)ds}]\leq \E_x[\frac{e^{\tau_0 c}-1}{c}].
\end{equation}
Now since $\tau_0$ is distributed as an exponential random variable with rate $\kappa$ and $c<\kappa$ the above quantity is finite. The second integral on the right hand side of (\ref{check3}) is equal to
\begin{equation}\label{check4}
\E_x[e^{\int_0^{\tau_0}(V(X_s)-\gamma)ds}\E_{X(\tau_0)}[\int_{0}^{\sigma_0}dt\;  e^{\int_0^t (V(X_s)-\gamma)ds}]].
\end{equation}
Since $V\leq M<\gamma$ on $A^c$ for any $z\notin A$
\begin{equation}
\E_{z}[\int_{0}^{\sigma_0}dt\;  e^{\int_0^t (V(X_s)-\gamma)ds}]\leq \int_0^\infty dt e^{-t(\gamma-M)}<\infty.
\end{equation}
We have already seen in (\ref{estimatesomeeigen}) that $\max_{x\in A}\E_x[ e^{\int_0^{\tau_0} (V(X_s)-\gamma)ds }]=\frac{\kappa}{\gamma-(N-\kappa)}$. Thus, (\ref{check4}) is finite and (\ref{check2}) is satisfied. This completes the proof of the lemma.
\end{proof}

The key ingredient of the proofs of Lemma \ref{lemmaAna2}  and Lemma \ref{lemmaAna} is the next result. For $\delta\in(0,1)$ define
\begin{equation}
A_n^{\delta}:=\{x:\eta(x)\geq n \delta	 \log 2 \}.
\end{equation}
Let 
\begin{equation}
I(x):=x\log x+(1-x)\log(1-x)+\log 2,\quad x\in[0,1]
\end{equation}
be Cramer's rate function.
\begin{lemma}\label{lemma2}\
	\begin{itemize}
		\item[(i)]
		Let $\delta>1/2$ and $\omega^\delta$ be the unique solution of $I(\omega^\delta)=2(1-\delta) \log 2$. Then $P$-a.s. for any $c < \omega^\delta$ 
		\begin{equation}
		d_{min}(A^\delta_n):=\min\Big\{d(x,y):x,y\in A_n^{\delta},x\not=y\Big\}\geq c n,
		\end{equation} 
		for all $n$ large enough.
		\item[(ii)] $P$-a.s. for any $i,k\in\N$ with $i\not=k$
		\begin{equation}
		d(x_i,x_k)\sim n/2,
		\end{equation}
				for all $n$ large enough.
	\end{itemize}
\end{lemma}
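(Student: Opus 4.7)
The plan is a first-moment / Borel--Cantelli argument for each part, combining independence of the exponential field $\eta_n$ with the Chernoff bound for the symmetric binomial.

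For part (i), I would fix $c<\omega^{\delta}$ and bound, via Markov's inequality, the expected number of ordered pairs $(x,y)\in A_n^{\delta}\times A_n^{\delta}$ with $x\neq y$ and $d(x,y)\le cn$. Each pair contributes $P(\eta_n(x)\ge n\delta\log 2)^{2}=2^{-2n\delta}$ by independence, and the total number of ordered pairs of $\Sigma_n$ within Hamming distance $cn$ is $2^{n}\sum_{k=1}^{\lfloor cn\rfloor}\binom{n}{k}$. For $c<1/2$, the standard Chernoff bound for the symmetric binomial yields $\sum_{k\le cn}\binom{n}{k}\le 2^{n}e^{-nI(c)}$, so the first moment is bounded by $\exp\{n[2(1-\delta)\log 2-I(c)]\}$. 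Since $I$ is strictly decreasing on $[0,1/2]$ and $I(\omega^{\delta})=2(1-\delta)\log 2$ by definition, every $c<\omega^{\delta}$ gives a summable bound, and Borel--Cantelli concludes. The a.s.\ event must be taken along a countable sequence $c_m\uparrow\omega^\delta$ to avoid an uncountable union.

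For part (ii), I would sandwich $d(x_{i,\tn},x_{k,\tn})$ around $n/2$ by pairing part (i) with its ``antipodal'' variant. Using $\binom{n}{k}=\binom{n}{n-k}$, the number of pairs at distance $\ge c'n$ equals $2^{n}\sum_{j\le(1-c')n}\binom{n}{j}$, so the same calculation shows that for every $c'>1-\omega^{\delta}$ there are $P$-a.s.\ no pairs in $A_n^{\delta}\times A_n^{\delta}$ at distance $\ge c'n$ for $n$ large. Given $\varepsilon>0$, I would pick $\delta<1$ close enough to $1$ that $\omega^{\delta}>1/2-\varepsilon$; this is possible because $I$ is continuous with $I(1/2)=0$, whence $\omega^{\delta}\nearrow 1/2$ as $\delta\nearrow 1$. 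Then $P$-a.s.\ every pair in $A_n^{\delta}$ has Hamming distance in $((1/2-2\varepsilon)n,(1/2+2\varepsilon)n)$ eventually. Since (\ref{expfieldgrow}) together with its extension to any fixed rank (already observed in the paper via $\eta_{k,\tn}=\sigma_k+\cdots+\sigma_{2^n}$) ensures $\eta_{i,\tn}/n\to\log 2$ for every fixed $i$, we have $x_{i,\tn}\in A_n^{\delta}$ for all $n$ large a.s.\ for any fixed $\delta<1$, so $d(x_{i,\tn},x_{k,\tn})/n\to 1/2$ upon letting $\varepsilon\downarrow 0$ along a countable sequence and taking a countable union over pairs $(i,k)$ with $i\neq k$.

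The only mild subtlety in both parts is the bookkeeping with countable sequences of parameters $(c_m,\delta_m,\varepsilon_m)$ to keep all events a.s.\ simultaneously; the probabilistic content is purely a first-moment estimate against the Cram\'er rate $I$, which is precisely why the threshold $\omega^\delta$ appears.
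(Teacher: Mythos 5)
Your proposal is correct, and the key observation is that the two parts of the lemma call for a comparison on different terms. For part (i) the paper gives no proof at all and simply cites the analogous statement from \cite{BAC08}; a first-moment computation (Markov, the Chernoff bound $\sum_{k\le cn}\binom{n}{k}\le 2^n e^{-nI(c)}$ for $c<1/2$, and Borel--Cantelli along a countable sequence $c_m\uparrow\omega^\delta$) is exactly the standard route, and your estimate $\exp\{n[2(1-\delta)\log 2 - I(c)]\}$ with $I(c)>I(\omega^\delta)$ is the right one. For part (ii), though, you take a genuinely different route from the paper. The paper's argument is a one-liner: by exchangeability of the i.i.d.\ continuous field, the ordered locations $(x_{1,\tn},x_{2,\tn},\dots)$ form a uniformly random labelling of $\Sigma_n$, so $d(x_{i,\tn},x_{k,\tn})$ is distributed as $\mathrm{Bin}(n,1/2)$ conditioned to be nonzero, and the claim then follows from binomial concentration plus Borel--Cantelli (the paper attributes it loosely to the SLLN; since the field is resampled at each $n$ it is really a Chernoff/Borel--Cantelli argument). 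Your alternative sandwiches $d(x_{i,\tn},x_{k,\tn})/n$ between $1/2-\varepsilon$ and $1/2+\varepsilon$ by pairing part (i) with the antipodal first-moment estimate (using $\binom{n}{k}=\binom{n}{n-k}$) and then letting $\delta\uparrow 1$ so that $\omega^\delta\uparrow 1/2$, together with the a.s.\ eventual membership $x_{i,\tn}\in A_n^\delta$ coming from $\eta_{i,\tn}/n\to\log 2$. Both are valid. The paper's observation is shorter and exploits the symmetry of the i.i.d.\ law directly; your squeeze argument is heavier but self-contained, re-derives the distance concentration purely from the Cram\'er rate function $I$ without appealing to the exchangeability trick, and as a bonus shows that \emph{all} pairs of sufficiently high peaks (not just the fixed ranked ones) concentrate at Hamming distance $\sim n/2$.
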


\begin{proof}[Proof of \ref{lemma2}]
	Since $P(\eta(x)\geq n \delta	 \log 2 )=2^{-\delta n}$ the statement of part (i) is same as part (ii) of Lemma ?? on page ?? of \cite{BAC08} with $\delta$ is replaced by $\gamma$ in the notation used in the aforementioned article. For part $(ii)$ note that for any $i,k$ the distribution of $d(x_i,x_k)$ is that of a Binomial random variable with parameters $n$ and $1/2$, conditioned to be non-zero. Hence, the result follows from strong law of large numbers.
\end{proof}

\begin{proof}[Proof of Lemma \ref{lemmaAna2}]
For $\delta\in(1/2,1)$ let
\begin{equation}
A_n^\delta=\{x:\eta(x)\geq n \delta \log 2 \}
\end{equation}\newcommand{\cd}{c^{\delta}}
Then by Lemma \ref{lemma2} for some $c\in(0,\omega^\delta)$, $P$-a.s. 
\begin{equation}
d_{\min}(A_n^\delta):=\min\{d(x,y):\;x,y\in A_n^\delta,x\not= y\}\geq cn
\end{equation}
for all $n$ large enough. We use Lemma \ref{lemmasimpleeigen} with $V$ given by $V\equiv \xi$ on $\Sigma_n\setminus \Gamma_l\cup\{x_{i,\tn}\}$ and $V\equiv -\infty $ on $\Gamma_l\setminus\{x_{i,\tn}\}$. Part (i) of Lemma \ref{lemma1} and the fact that $f$ in Assumption $(R1)$ is strictly increasing imply that $P$-a.s. $x_{i,\tn}\in A_n^\delta$ for $n$ large enough. This yields
\begin{equation}
N=\max_{A_n^\delta}V=\xi_{i,\tn}.
\end{equation}
Hence, with $\gamma=\xi_{i,\tn}-\kappa+\e_n$,
\begin{equation}
\lambda_{i,l}\leq \xi_{i,\tn}-\kappa+\e_n
\end{equation}
if
\begin{equation}\label{soninq}
\frac{\kappa}{\e_n}<\frac{n(\xi_{i,\tn}-\kappa+\e_n-M)}{\kappa}.
\end{equation}
By the definition of $A_n^\delta$ and Assumption $(R1)$ we have $M\leq \psi_n(n\delta\log 2)\leq \delta_1\theta n$ for some $\delta_1\in(0,1)$. By Lemma \ref{lemma1} $\xi_i\geq \delta_2\theta n$ for some $\delta_2\in(\delta_1,1)$. Thus, (\ref{soninq}) is satisfied if
\begin{equation}
\frac{\kappa}{\e_n}< \frac{(\delta_2-\delta_1)\theta n^2-\kappa n}{\kappa}. 
\end{equation}
Hence, we can choose the  sequence $\e_n$ so that $\e_n=C/n^2$.  
Therefore,
\begin{equation}
\lambda_{i,l}\leq \xi_{i,\tn}-\kappa + O(1/n^2).
\end{equation}
Now we prove the lower bound for $\lambda_{i,l}$. Let $l_n$ be as in Assumption $(L)$. We first claim that $P$-a.s. 
\begin{equation}\label{neighbors}
\xi_n(y)\geq -l_n,\quad \forall y\sim x_{k,\tn},\; \forall k\in \N. 
\end{equation}
Note that the random variables $\xi_n(y),y\sim x_{k,\tn}$, are independent and have the distribution of $\xi_n$ conditioned on not being the $k$-th largest. We have the following obvious bound for the latter
\begin{equation}
P(\xi_n(y)\leq -l_n| \;y\not=x_{k,\tn})=\frac{P(\xi_n(y)\leq -l_n, \;y\not=x_{k,\tn})}{P(y\not= x_{k,\tn})}\leq\frac{G_n(-l_n)}{P(y\not= x_{k,\tn})}= \frac{G_n(-l_n)}{1-1/\tn}.
\end{equation}
Consequently,
\begin{equation}
P(\exists y \sim x_{k,\tn} \mbox{ s.t. }\xi_n(y)\leq -l_n )\leq CnG_n(-l_n).
\end{equation}
By Assumption $(L)$ the last quantity above is summable in $n$, and an application of Borel-Cantelli lemma proves (\ref{neighbors}).  Using (\ref{neighbors}) we have $\lambda_{i,l}$ is bounded below by the principle eigenvalue, $\tilde{\lambda}$, of $\kappa \Delta_n +V$ on $x_{i,\tn}\cup\{y:y\sim x_{i,\tn}\}$ with zero boundary conditions, where $V(x_{i,\tn})=\xi_{i,\tn}$ and $V(y)=-l_n$ for $y\sim x_{i,\tn}$. Since $\xi_{i,\tn}-\kappa \gg -l_n$, the principle eigenvalue of the operator one gets by collapsing the neighbours of $x_{k,\tn}$ to a single state with potential $-l_n$ is same as $\tilde{\lambda}$. The matrix representation of the the states operator is
\begin{equation}
\left[
\begin{array}{ll}
\xi_{i,\tn} -\kappa&\kappa\\
\kappa/n&-\kappa(1-/n){-l_n}
\end{array}
\right].
\end{equation}
Using the fact that $l_n\ll n$ (by Assumption $(L)$), a simple calculation shows that the principle eigenvalue of the above matrix is
\begin{equation}
\xi_i-\kappa +\frac{C\kappa^2}{n\xi_{i,\tn}}+o(n^{-2}).
\end{equation}
Finally, since $\xi_{i,\tn}\sim \theta n$ we have the right upper and conclude that
\begin{equation}
\lambda_{i,l}= \xi_{i,\tn}-\kappa +\Theta(n^{-2}).
\end{equation}

For the spectral gap, using the min-max formula, we have that the second largest eigenvalue is bounded above by the principle eigenvalue of $\kappa \Delta_n +\xi_n$ with zero boundary conditions on $\Gamma_l$. With the same exact proof above we get that this principle eigenvalue is $\xi_{l+1,\tn}-\kappa+o(1)$ (since $N$ in this case is $\xi_{l+1,\tn}$). Hence, we are finished with the proof of the spectral gap.  

\end{proof}

\begin{proof}[Proof of Lemma \ref{lemmaAna} part (i)]
Since $\nu_{i,l}$ is the principle eigenfunction of a symmetric operator, by Perron-Frobenius theorem its values are non-negative. Therefore, recalling that $\nu_{i,l}(x_i)=1$, $\sum_{x\not= x_1}\nu_{i,l}(x)\to 0$ implies $\norm{\nu_{i,l}}_2\to 1$. 

Now we show that $\sum_{x\not= x_1}\nu_{i,l}(x)\to 0$. For $\delta\in(1/2,1)$ let $A_n^\delta$ and $\omega^\delta$ be as in Lemma \ref{lemma2}. We again set $\xi=\infty$ on $\Gamma_l\setminus\{x_i\}$, and define $B_n:=B(x_i,c n-3)$ for some $c\in(0,\omega^\delta)$. We will first prove that
\begin{equation}\label{ara1}
\max_{x\notin B_n} \nu_{i,l}(x)\leq \cn,
\end{equation}	
where as before $c_n=\frac{1}{2}n\log n$. We write
\begin{equation}\label{inshson}
\begin{aligned}
\nu_{i,l}(x)&= \E_{x}[\exp(\int_0^{\tau_{x_{i,\tn}}} (\xi(X_s)-\lambda_{i,l} ) ds) \incl{\tau_{x_{i,\tn}}=\tau_{\Gamma_l}} \incl{\tau_{A_n^\delta \setminus\{x_{i,\tn}\}} >\tau_{x_{i,\tn}}}]
\\&+\E_{x}[\exp(\int_0^{\tau_{x_{i,\tn}}} (\xi(X_s)-\lambda_{i,l} ) ds) \incl{\tau_{x_{i,\tn}}=\tau_{\Gamma_l}} \incl{\tau_{A_n^\delta \setminus\{x_{i,\tn}\}} \leq \tau_{x_{i,\tn}}  }]
\end{aligned}
\end{equation}
Since $X(s)\notin A_n^\delta$ for any $s\in [0,\tau_{x_{i,\tn}})$ on the event $\tau_{A_n^\delta\setminus\{x_{i,\tn}\}}>\tau_{x_{i,\tn}}$, using the definition of $A_n^\delta$ and Assumption $(R1)$ we have $\xi(X_s)\leq \theta'n$ on the same event, for some $\theta'<\theta$. Hence, the first expectation on the right hand side of (\ref{inshson}) is bounded above by
\begin{equation}
\E_x[\exp(\int_0^{\tau_{x_{i,\tn}}} (\theta' n-\lambda_{i,l}) ds) ]
\end{equation}
By Lemma \ref{lemmaAna2} we have $\lambda_{i,l}=\xi_{i,\tn}-\kappa +o(1)$. As a result, via Lemma \ref{lemma1}, $\lambda_{i,l}\gg\theta' n$. Finally, since $x\notin B_n$, $\tau_{x_{i,\tn}}$ is stochastically bounded below by the sum of $cn-3$ i.i.d. exponentials with rate $\kappa$, and this yields
\begin{equation}
\E_x[\exp(\int_0^{\tau_{x_{i,\tn}}} (\theta' n-\lambda_{i,l}) ds) ]\leq \left[\frac{\kappa}{\kappa+\lambda_{i,l}-\theta' n}\right]^{c n-3}.
\end{equation}
Using the fact that $\lambda_{i,l}=\xi_{i,\tn}-\kappa +o(1)$ and Lemma \ref{lemma1} we get
\begin{equation}\label{inshson2}
\left[\frac{\kappa}{\kappa+\lambda_{i,l}-\theta' n}\right]^{c n-3}=\exp\left(-c n\log n (1+o(1))\right).
\end{equation}
On the event $\incl{\tau_{A_n^\delta \setminus\{x_{i,\tn}\}} \leq \tau_{x_{i,\tn}}  }$ we have $\xi(X_s)\leq \theta' n\ll \lambda_{i,l}$ for any $s\in [0,\tau_{A_n^\delta \setminus\{x_{i,\tn}\}})$. Hence, using the strong Markov property the second term on the right hand side of (\ref{inshson}) is bounded above by
\begin{equation}
\max_{x\in A_n^\delta\setminus\{x_i\}} \nu_{i,l}(x).
\end{equation}  
Since $d_{min}(A_n^\delta)\geq c n$ and $x_i\in A_n^\delta$ for $n$ large enough, if $x\in A_n^\delta\setminus\{x_i\}$ then for any $y\sim x$ we have $y\notin A_n^\delta$ and $d(y, B_n)>1$. Hence,
\begin{equation}
\begin{aligned}
\max_{x\in A_n^\delta\setminus\{x_{i,\tn}\}} \nu_{i,l}(x)&\leq \frac{\kappa}{\kappa+\lambda_{i,l}-\xi_{l+1,\tn}}\frac{\kappa}{\kappa+\lambda_{i,l}-\theta' n}\max_{x\notin B_n}\nu_{i,l}(x)
\\&\leq\frac{C}{n}\max_{x\notin B_n}\nu_{i,l}(x)
\end{aligned}
\end{equation} 
for some positive constant $C$. For the first inequality in the above display we used the fact that $\xi(x)\leq \xi_{l+1,\tn}$ for $ x \in A_n^\delta\setminus\{x_{i,\tn}\}$ and $\xi(x)\leq \theta'n$ for $x\notin A_n^\delta$, and for the second inequality we used both parts of Lemma \ref{lemma1} and Lemma \ref{lemmaAna2}. 
Hence, together with (\ref{inshson}) and (\ref{inshson2}) we get
\begin{equation}
\max_{x\notin B_n}\nu_{i,l}(x)\leq \exp\left(-c n\log n (1+o(1))\right)+\frac{C}{n}\max_{x\notin B_n}\nu_{i,l}(x).
\end{equation}
Then
\begin{equation}
\begin{aligned}
\max_{x\notin B_n}\nu_{i,l}(x)&\leq (1-C/n)^{-1}\exp\left(-c n\log n (1+o(1))\right)
\\&=\exp\left(-c n\log n (1+o(1))\right).
\end{aligned}
\end{equation}
Since $\omega^\delta\to 1/2$ as $\delta\to 1$, by part (i) of Lemma \ref{lemma2}, as $\delta\to 1$, we can choose $c$ as close to $1/2$ as we wish. Hence, we are finished with the proof of (\ref{ara1}).
By (\ref{ara1}) we have
\begin{equation}
\sum_{x\in B_n^c}\nu_{i,l}(x)\leq 2^n \exp\left(-\frac{1}{2} n\log n (1+o(1))\right)\longrightarrow 0.
\end{equation}
Hence, it remains to prove that 
\begin{equation}
\sum_{x\in B_n\setminus\{x_i\}}\nu_{i,l}(x)\to 0.
\end{equation}
As before $\xi(x)\leq \theta' n\ll\lambda_{i,l}$ for $ x\notin A_n^\delta$, and $B_n\cap AN^\delta=\{x_{i,\tn}\}$. Therefore, by (\ref{ara1}) and the strong Markov property, for any $x\in B_n\setminus\{x_{i,\tn}\}$
\begin{equation}
\E_{x}[\exp(\int_0^{\tau_{x_{i,\tn}}} (\xi(X_s)-\lambda_{i,l} ) ds) \incl{\tau_{x_{i,\tn}}=\tau_{\Gamma_l}} \incl{\tau_{B_n^c}< \tau_{x_{i,\tn}}  }]\leq \cn.
\end{equation}
As a result, it is enough to check that
\begin{equation}
\sum_{x\in B_n\setminus\{x_{i,\tn}\}}\E_{x}[\exp(\int_0^{\tau_{x_{i,\tn}}} (\xi(X_s)-\lambda_{i,l} ) ds) \incl{\tau_{x_{i,\tn}}=\tau_{\Gamma_l}} \incl{\tau_{B_n^c}> \tau_{x_{i,\tn}}  }]\longrightarrow 0.
\end{equation}
We now construct a stochastic lower bound for $\tau_{x_{i,\tn}}$ through the Coupon collector's problem. Let $x$ be such that $d(x_{i,\tn},x)=r$. Without loss of generality we can assume that $x_{i,\tn}=(+1,+1,\dots,+1)$. Then the number of $-1$'s in the configuration of $x$ is exactly $r$. Now we reject all the jumps that switches a $+1$ to a $-1$, in other words, we consider only the spin sites with $-1$ and wait for all of them to become $+1$. Then this waiting time, denoted by $\tau'$, is a lower bound. Observe that the first time a $-1$ becomes a $+1$ is distributed as an exponential random variable with rate $\kappa r/n$ (recall that per spin the jump rate is $\kappa/n$); after this event there are now $r-1$ spins with a $-1$ sign and the first time one of them becomes a $+1$ is distributed as an exponential random variable with rate $\kappa (r-1)/n$; proceeding like this we wait finally an exponential time with rate $\kappa/n$ for the last $-1$ sign to become a $+1$. Hence, $\tau'$ is given by
\begin{equation}
\tau'=\frac{\alpha_r}{\kappa r/n}+\frac{\alpha_{r-1}}{\kappa (r-1)/n}+\cdots+\frac{\alpha_1}{\kappa/n}
\end{equation}
where $\alpha_1,\dots$ is an i.i.d. sequence of exponential random variables with rate 1. Hence, for any $x$ s.t. $d(x_{i,\tn},x)=r$
\begin{equation}
\begin{aligned}
& \E_{x}[\exp(\int_0^{\tau_{x_{i,\tn}}} (\xi(X_s)-\lambda_{i,l} ) ds) \incl{\tau_{x_{i,\tn}}=\tau_{\Gamma_l}} \incl{\tau_{B_n^c}> \tau_{x_{i,\tn}}  }]\\ &\leq
\E[\exp(-{\tau'} (\lambda_{i,l}-\theta' n))] 
\\&=\prod_{j=1}^r \frac{\kappa j/n}{\kappa j/n+\lambda_{i,l}-\theta' n}\leq \prod_{j=1}^r \frac{j}{Cn^2} 
\end{aligned}
\end{equation}
for some positive constant $C$. Note that for the last step once again we used that $\lambda_{i,l}\sim \theta n$.
For the last term above we use the following upper bound
\begin{equation}
\begin{aligned}
\prod_{j=1}^r \frac{j}{Cn^2} &= C'\exp\big(-2r \log n +\sum_{j=1}^r \log j\big)
\\& \leq C''\exp\big(-2r \log n +r \log r\big)=C'' n^{-2r}r^r.
\end{aligned}
\end{equation}
for some positive constants $C',C''$. Since
\begin{equation}
|\{x:\;d(x_{i,\tn},x)=r\}|=\binom{n}{r},
\end{equation}
in order to finish the proof it is enough to check that
\begin{equation}
\sum_{r=1}^{c n}n^{-2r} r^r \binom{n}{r}\longrightarrow 0.
\end{equation}
Since $c<1$, using Sterling's approximation we get
\begin{equation}
\begin{aligned}
\sum_{r=1}^{c n} n^{-2r} r^r \binom{n}{r}&\leq C \sum_{r=1}^{c n} 
n^{-2r}r^r \frac{n^n}{(n-r)^{n-r} r!}e^{-r}
\\& =C\sum_{r=1}^{c n}  n^{-r}\frac{r^r}{r!}\big[1+\frac{r}{n-r}\big]^{n-r}e^{-r}
\\&\leq C\sum_{r=1}^{c n}  n^{-r}\frac{r^r}{r!}
\end{aligned}
\end{equation}
Since $r^r\leq   r!\; e^r$ for any $r\in\N$ we get
\begin{equation}
\sum_{r=1}^{c n}  n^{-r}\frac{r^r}{r!}\leq \sum_{r=1}^{c n}  n^{-r}e^r\leq \sum_{r=1}^{\infty}  (e/n)^{r}=\frac{1}{1-e/n}-1\underset{n\to\infty}\longrightarrow 0.
\end{equation}
Hence, we are finished with the proof of part (i).
\end{proof}

\begin{proof}[Proof of Lemma \ref{lemmaAna} part (ii)]
	Since $i\not=k$ and $x_{k,\tn}\in A_n^\delta$ for $n$ large enough, $x_{k,\tn}\notin B_n$, where $B_n=B(x_{i,\tn},c n-3)$ as described in the proof of Lemma \ref{lemmaAna} part (i). Hence, via (\ref{ara1}) we have the right upper bound for $\nu_{i,l}(x_{k,\tn})$. 
	
For $l_n$ given in Assumption $(L)$ for some $\delta\in(0,1)$ we define
\begin{equation}
L_n:=\{x\in \Sigma_n:\;-l_n\leq \xi_n(x)\leq \psi_n(n\delta \log 2)\}
\end{equation}
and
\begin{equation}\label{proppn}
p_n:=P(\xi_n(x)\notin L_n).
\end{equation}
For $x\not= x_{i,\tn}$ we denote by $H(x)$ the number of nearest neighbour paths $x=y_0\to y_1\to\cdots \to y_d=x_{i,\tn}$, where $d=d(x,x_{i,\tn})$, such that, $\xi_n(y_j)\in L_n$ for all $j=1,\dots,d-1$. It is understood that $H(x)=1$ for $x$ such that $d(x,x_{i,\tn})=1$. Note that $H(x)$ and $H(z)$ have identical distributions for any $x,z$ with $d(x,x_{i,\tn})=d(z,x_{i,\tn})$. We label the expectation of any such distribution by $\mathcal{H}(d)$, that is, $\mathcal{H} (d)=E[H(x)]$ for some $x$ with $d(x,x_{i,\tn})=d$. Finally, we define
\begin{equation}
S(x):=\{y\sim x: d(y,x_i)=d(x,x_{i,\tn})-1 \mbox{ and }y\in L_n\}.
\end{equation}
Any nearest neighbour path from $x$ to $x_{i,\tn}$ of length $d(x,x_{i,\tn})$ that stays in $L_n$ in between should move to a vertex in $S(x)$ in its first step. Hence,
\begin{equation}
H(x)=\sum_{y\in S(x)} H(y).
\end{equation}
Since $\incl{y\in S(x)}$, $y\sim x$, and $H(y')$ are independent events for any $y'\sim x$ we get
\begin{equation}
E[H(x)]=E[|S(x)|]E[H(y)]
\end{equation}
For $x$, with $d=d(x,x_i)$, $|S(x)|$ is a Binomial random variable with parameters $d$ and $(1-p_n)$. This yields
\begin{equation}
\mathcal{H}(d)=d(1-p_n)\mathcal{H}(d-1),
\end{equation}
From $\mathcal{H}(1)=1$ it readily follows that
\begin{equation}
\mathcal{H}(d)=d! (1-p_n)^{d-1}.
\end{equation}
Since $H(x)\leq d!$, for any $\theta\in(0,1)$ we get
\begin{equation}
\begin{aligned}
\mathcal{H}(d)=E[H(x)]&=E[H(x)\incl{H(x)\leq \theta \mathcal{H}(d)}]+E[H(x)\incl{H(x)\geq \theta \mathcal{H}(d)}]
\\& \leq 
\theta \mathcal{H}(d)+d! P(H(x)\geq \theta \mathcal{H}(d)).
\end{aligned}
\end{equation}
Hence,
\begin{equation}
P(H(x)\geq \theta \mathcal{H}(d))\geq (1-\theta)\frac{\mathcal{H}(d)}{d!}=(1-\theta)(1-p_n)^{d-1}.
\end{equation}
Let $\theta=\theta_n=o(1)$. Since $d(x,x_i)\leq n$ for any $x\in\Sigma_n$
\begin{equation}
P(H(x)\leq \theta_n \mathcal{H}(d))\leq 1-(1-\theta_n)(1-p_n)^{n-1}\leq C \big(\theta_n+np_n+\theta_n n p_n\big).
\end{equation}
Recalling (\ref{proppn}) 
\begin{equation}
p_n=G(-l_n)+P(\xi_n(x)\geq \psi_n(n\delta\log 2)).
\end{equation}
The second term above decay as $2^{-\delta'n}$ for some $\delta'>0$. Hence, using Assumption (L) we have $\sum_n np_n<\infty$. Now we choose $\theta_n = n^{-1-a}$ for some $a>0$ and get
\begin{equation}
\sum_n \big(\theta_n+np_n+\theta_n n p_n\big) <\infty.
\end{equation}
Hence, by an application of Borel-Cantelli lemma we reach at that $P$-a.s. $H(x)\geq \theta_n H(r)$. Assumption $(R1)$ and part (i) of Lemma \ref{lemma1} imply $\xi_n(x_{l,\tn})\gg \psi_n(\delta \log 2 n)$. Thus, $P$-a.s. $L_n\cap \Gamma_l=\emptyset$ for $n$ large enough.

Let $d=d(x_{k,\tn},x_{i,\tn})$. Since $H(x_{k,\tn})\geq n^{-1-a} \mathcal{H}(d)=n^{-1-a} d! (1-p_n)^{d-1}$ and the probability of any nearest neighbour path of length $d$ is $n^{-d}$, using the probabilistic representation of $\nu_{i,l}$ we get that $P$-a.s. for $n$ large enough
\begin{equation}\label{busonson}
\begin{aligned}
\nu_{i,l}(x_{k,\tn})&=\E_{x_{k,\tn}}[\exp(\int_0^{\tau_{x_{i,\tn}}}    (\xi_n(X_s)-\lambda_{i,l}) ds\incl{\tau_{x_{i,\tn}}=\tau_{\Gamma_l}})]
\\&\geq \frac{n^{-1-a}d!(1-p_n)^{d-1}}{n^d}\prod_{j=0}^{d-1} \frac{\kappa}{\kappa+\lambda_{i,l}-\xi_n(y_j)}
\\&\geq \frac{n^{-1-a}d!(1-p_n)^{d-1}}{n^d} \frac{\kappa}{\kappa+\lambda_{i,l}-\xi_{k,\tn}}\left[\frac{\kappa}{\kappa+\lambda_{i,l}+l_n}\right]^{d-1}.
\end{aligned}
\end{equation}
$d=dist(x_i,x_k)\sim n/2$ by part (ii) of Lemma \ref{lemma2}. This and the fact that $p_n\to 0$ yield for the first term above
\begin{equation}
\frac{n^{-1-a}d!(1-p_n)^{d-1}}{n^d}=\exp(-O(n)).
\end{equation}
$\lambda_{i,l}+\kappa=\xi_{i,\tn}+o(1)$ by Lemma \ref{lemmaAna2}; $\xi_{i,\tn}\sim \theta n$ by part (i) of Lemma \ref{lemma1}; by part (ii) of Lemma \ref{lemma1} $\xi_{i,\tn}-\xi_{k,\tn}=C+o(1)$ for some random positive constant $C$. Hence, using the fact that $d=dist(x_i,x_k)\sim n/2$  and  $l_n\ll n$ we can conclude that
\begin{equation}
\begin{aligned}
\nu_{i,l}(x_{k,\tn})&\geq C' \exp\Big(-d\log (\xi_{i,\tn}+o(1)+l_n ) +O(n) \Big)
\\&=\exp\Big(-\frac{n}{2}\log n (1+o(1))\Big).
\end{aligned}
\end{equation}
This gives the right lower bound and we are finished with the proof of part (ii) of Lemma \ref{lemmaAna}.

\end{proof}

\section*{Acknowledgments}

This work was supported by German Research Foundation (DFG), within the SPP Priority Programme 1590 ``Probabilistic Structures in Evolution". 

\bibliographystyle{plain}
\bibliography{PAM}

\end{document}